\newtheorem{theorem}{Theorem}[section]
\newtheorem{corollary}[theorem]{Corollary}
\newtheorem{definition}[theorem]{Definition}
\newtheorem{lemma}[theorem]{Lemma}
\newtheorem{proposition}[theorem]{Proposition}
\theoremstyle{definition}
\newtheorem{remark}[theorem]{Remark}
\def\J#1#2#3{ \left\{ #1,#2,#3 \right\} }
\def\NN{{\mathbb{N}}}
\def\11{\textbf{$1$}}
\def\CC{{\mathbb{C}}}
\begin{document}

\numberwithin{equation}{section}

\title[Jordan weak amenability and orthogonal forms on JB$^*$-algebras]{Jordan weak amenability and orthogonal forms on JB$^*$-algebras}

\author[F.B. Jamjoom]{Fatmah B. Jamjoom}
\address{Department of Mathematics, College of Science, King Saud University, P.O.Box 2455-5, Riyadh-11451, Kingdom of Saudi Arabia.}
\email{fjamjoom@ksu.edu.sa}

\author[A.M. Peralta]{Antonio M. Peralta}
\address{Departamento de An{\'a}lisis Matem{\'a}tico, Universidad de Granada,\\
Facultad de Ciencias 18071, Granada, Spain}
\curraddr{Visiting Professor at Department of Mathematics, College of Science, King Saud University, P.O.Box 2455-5, Riyadh-11451, Kingdom of Saudi Arabia.}
\email{aperalta@ugr.es}

\author[A.A. Siddiqui]{Akhlaq A. Siddiqui}
\address{Department of Mathematics, College of Science, King Saud University, P.O.Box 2455-5, Riyadh-11451, Kingdom of Saudi Arabia.}
\email{asiddiqui@ksu.edu.sa}

\thanks{The authors extend their appreciation to the Deanship of Scientific Research at King Saud University for funding this work through research group no  RG-1435-020. Second author also partially supported by the Spanish Ministry of Science and Innovation, D.G.I. project no. MTM2011-23843.}

\begin{abstract} We prove the existence of a linear isometric correspondence between the Banach space of all symmetric orthogonal forms on a JB$^*$-algebra $\mathcal{J}$ and the Banach space of all purely Jordan generalized derivations from $\mathcal{J}$ into $\mathcal{J}^*$. We also establish the existence of a similar linear isometric correspondence between the Banach spaces of all anti-symmetric orthogonal forms on $\mathcal{J}$, and of all Lie Jordan derivations from $\mathcal{J}$ into $\mathcal{J}^*$.
\end{abstract}

\date{}
\maketitle

\section{Introduction}\label{sec:intro}

Let $\varphi$ and $\psi$ be functionals in the dual of a C$^*$-algebra $A$. The assignment $$(a,b) \mapsto V_{\varphi,\psi} (a,b) :=\varphi  \Big(\frac{a b+ ba}{2}\Big) +  \psi  \Big(\frac{a b- ba}{2}\Big)$$ defines a continuous bilinear form on $A$ which also satisfies the following property: given $a,b\in A$  with $a\perp b$ (i.e. $a b^* = b^* a=0$) we have $V_{\varphi,\psi} (a,b^*) = 0$.  A continuous bilinear form $V : A\times A \to \mathbb{C}$ is said to be \emph{orthogonal} when $V (a,b) = 0$ for every $a,b\in A_{sa}$ with $a\perp b$ (see \cite[Definition 1.1]{Gold}).  A renowned and useful theorem, due to S. Goldstein \cite{Gold}, gives the precise expression of every continuous bilinear orthogonal form on a C$^*$-algebra.

\begin{theorem}\label{thm Goldstein}\cite{Gold} Let $V : A\times A \to \CC$ be a continuous orthogonal form on a C$^*$-algebra.
Then there exist functionals $\varphi,\psi\in A^*$ satisfying that
$$ V (a,b) = V_{\varphi,\psi} (a,b)= \varphi (a \circ b) + \psi ([a,b]),$$ for
all $a,b\in A$, where $a\circ b:= \frac12 (ab +ba)$, and $[a,b]:=
\frac12 (ab -ba)$. $\hfill\Box$
\end{theorem}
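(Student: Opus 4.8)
The plan is to reduce the statement to two independent sub-problems by splitting $V$ into its symmetric and anti-symmetric parts. Writing $V^s(a,b):=\frac12\big(V(a,b)+V(b,a)\big)$ and $V^a(a,b):=\frac12\big(V(a,b)-V(b,a)\big)$, I first note that for self-adjoint $a\perp b$ one automatically has $b\perp a$, so both $V^s$ and $V^a$ are again orthogonal. Since $a\circ b$ is symmetric and $[a,b]$ is anti-symmetric in $(a,b)$, the target identity is equivalent to proving $V^s(a,b)=\varphi(a\circ b)$ and $V^a(a,b)=\psi([a,b])$ separately. To have a unit available I would pass to the bidual $A^{**}$, a von Neumann algebra on which $V$ extends to a separately weak$^*$-continuous form via the usual Arens/Goldstine machinery; one must check that this extension still vanishes on orthogonal self-adjoint elements of $A^{**}$, which is the first technical point to secure.

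For the symmetric part I would localize to abelian subalgebras. Given commuting self-adjoint $a,b$, the C$^*$-subalgebra they generate is some $C_0(\Omega)$, on which orthogonality says exactly that $V^s(f,g)=0$ whenever $f,g$ have disjoint supports. A simple-function/Stone--Weierstrass approximation then forces the ``diagonal'' representation $V^s(f,g)=\mu(fg)$ for a measure $\mu$ depending on $\Omega$. The candidate global functional is $\varphi(x):=V^s(x^{1/2},x^{1/2})$ for $x\ge 0$, extended to self-adjoints by $\varphi(x):=\varphi(x_+)-\varphi(x_-)$ and then complex-linearly to $A$. The real work is additivity, i.e. proving $\varphi(x+y)=\varphi(x)+\varphi(y)$ for non-commuting positives, together with boundedness, both of which I expect to follow from continuity of $V$ and a spectral-projection and weak$^*$-density argument in $A^{**}$. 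Once $\varphi$ is a bounded functional, the identity $V^s(a,b)=\varphi(a\circ b)$ is verified first on the commuting (hence abelian) case and then propagated by polarization and continuity.

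For the anti-symmetric part the orthogonality constraint is weaker, and this is where I expect the main difficulty. The useful observation is that the restriction of $V^a$ to any abelian subalgebra is an \emph{anti-symmetric} orthogonal form, which by the commutative step must be the anti-symmetric part of a form $\mu(fg)$, hence identically zero; thus $V^a(a,b)=0$ whenever $a$ and $b$ commute. I would then define $\psi$ on the linear span of commutators by $\psi([a,b]):=V^a(a,b)$ and try to extend it continuously to $A^*$. The crux is well-definedness: showing that $[a,b]=[c,d]$ forces $V^a(a,b)=V^a(c,d)$, equivalently establishing the cyclic/cocycle identities such as $V^a(ab,c)+V^a(bc,a)+V^a(ca,b)=0$ that make $V^a$ factor through the commutator map. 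These identities have to be extracted from orthogonality by decomposing general products into self-adjoint pieces with controlled supports and exploiting the vanishing on commuting pairs.

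In summary, the scheme is: extend to the bidual, split into symmetric and anti-symmetric orthogonal forms, represent the symmetric part by $\varphi$ through a commutative-reduction-plus-gluing argument, and represent the anti-symmetric part by $\psi$ through the vanishing-on-commuting-pairs property together with a cocycle identity. The two hardest steps are the additivity and boundedness of $\varphi$ across non-commuting elements and the well-definedness of $\psi$ on commutators; both ultimately rest on controlling $V$ via spectral projections and weak$^*$-continuity in $A^{**}$.
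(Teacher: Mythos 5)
You should first note that the paper does not prove this theorem at all: it is imported from \cite{Gold} (the $\Box$ marks a citation, not a proof), so your attempt must be measured against the known proofs, namely Goldstein's original argument and the simplified proof of Haagerup--Laustsen \cite{HaaLaust} sketched in this paper's introduction. Your symmetric half is essentially on the right track but is set up in a needlessly hard way: defining $\varphi(x):=V^s(x^{1/2},x^{1/2})$ and then proving additivity across non-commuting positives is circular, since that additivity is exactly the orthogonal-additivity statement you are trying to establish, and you offer no mechanism for it. The standard repair (and what this paper does in the JB$^*$-setting, see the proof of Proposition \ref{p Jordan symm orthog forms}) is to define $\varphi:=V^{s\,***}(1,\cdot)$ directly as a slice of the separately weak$^*$-continuous Arens extension, so linearity and boundedness are free, and then prove $V^s(a,a)=\varphi(a^2)$ by the range-projection approximation $V^s(a,a)=V^{s\,***}(a^2,r(a))=V^{s\,***}(a^2,1)$, finishing by polarization. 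With that modification the symmetric half is sound.

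The genuine gap is in the anti-symmetric half. Well-definedness and, above all, \emph{boundedness} of $\psi$ on the linear span of commutators cannot be extracted from orthogonality by cyclic/cocycle identities: what your plan requires is precisely that $D_{V^a}:a\mapsto V^a(a,\cdot)$ be an \emph{inner} derivation from $A$ into $A^*$, i.e., the weak amenability of $A$, which is a deep theorem of Haagerup \cite{Haa83} (resting on the non-commutative Grothendieck inequality) and not a formal consequence of vanishing on commuting pairs. Note also that in a C$^*$-algebra without bounded traces the closed span of commutators is all of $A$, so your $\psi$ is being defined everywhere and continuity is the entire problem, while when traces exist $\psi$ is not even unique (see \cite[Proposition 2.6]{Gold}), so the relation $[a,b]=[c,d]\Rightarrow V^a(a,b)=V^a(c,d)$ is the wrong invariant to chase in isolation. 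The workable route, which this paper adapts to JB$^*$-algebras in Lemma \ref{l HaaLaust 3.4} and Proposition \ref{p Jordan anti-symm orthog forms}, is: use the commutative case to get $V^a(a^2,a)=0$ for $a\in A_{sa}$, conclude via the polarization trick of \cite[Lemma 3.4]{HaaLaust} that $D_{V^a}$ is a bounded Jordan derivation, upgrade it to an associative derivation by Johnson's theorem \cite{John96}, and only then invoke weak amenability to write $D_{V^a}=\mathrm{adj}_{\psi}$, giving $V^a(a,b)=\psi([a,b])$ up to normalization. A smaller flaw: your claim that $V^a$ vanishes on all commuting pairs is only justified for pairs generating an abelian $*$-subalgebra (e.g.\ commuting self-adjoint elements); for general commuting $a,b$ the self-adjoint components need not commute, though this is harmless once the derivation route is taken.
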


Henceforth, the term ``form'' will mean a ``continuous bilinear form''. It should be noted here that by the above Goldstein's theorem, for every orthogonal form $V$ on a C$^*$-algebra we also have $V(a, b^*) = 0$, for every $a,b\in A$ with $a\perp b$.\smallskip

The applications of Goldstein's theorem appear in many different contexts (\cite{BurFerGarMarPe,BurFerGarPe,GarPePuRa,HaaLaust}). Quite recently, an extension of Goldstein's theorem for commutative real C$^*$-algebras has been published in \cite{GarPe14}.\smallskip

Making use of the weak amenability of every C$^*$-algebra, U. Haagerup and N.J. Laustsen gave a simplified proof of Goldstein's theorem in \cite{HaaLaust}. In the third section of the just quoted paper, and more concretely, in the proof of \cite[Proposition 3.5]{HaaLaust}, the above mentioned authors pointed out that for every anti-symmetric form  $V$ on a C$^*$-algebra $A$ which is orthogonal on $A_{sa}$, the mapping $D_{_V}: A \to A^*$, $D_{_V}(a) (b) = V(a,b)$ {\rm(}$a,b\in A${\rm)} is a derivation. Reciprocally, the weak amenability of $A$ also implies that every derivation $\delta$ from $A$ into $A^*$ is inner and hence of the form $\delta (a) = \hbox{adj}_{\phi} (a) = \phi a - a \phi$ for a functional $\phi \in A^*$. In particular, the form $V_{\delta} (a,b) = \delta (a) (b)$ is anti-symmetric and orthogonal.\smallskip

The above results are the starting point and motivation of the present note. In the setting of C$^*$-algebras we shall complete the above picture showing that symmetric orthogonal forms on a C$^*$-algebra $A$ are in bijective correspondence with the \emph{purely Jordan generalized derivations} from $A$ into $A^*$ (see Section \ref{sec:derivations} for definitions). However, the main goal of this note is to explore the orthogonal forms on a JB$^*$-algebra and the similarities and differences between the associative setting of C$^*$-algebras and the wider class of JB$^*$-algebras.\smallskip

In Section \ref{sec:derivations} we revisit the basic theory and results on Jordan modules and derivations from the associative derivations on C$^*$-algebras to Jordan derivations on C$^*$-algebras and JB$^*$-algebras. The novelties presented in this section include a new study about generalized Jordan derivations from a JB$^*$-algebra $\mathcal{J}$ into a Jordan Banach $\mathcal{J}$-module in the line explored in \cite{LiPan}, \cite[\S 4]{AlBreExVill09}, and \cite[\S 3]{BurFerPe2013}. We recall that, given a Jordan Banach $\mathcal{J}$-module $X$ over a JB$^*$-algebra, a \emph{generalized Jordan derivation} from $\mathcal{J}$ into $X$ is a linear mapping $G: \mathcal{J}\to X$ for which there exists $\xi\in X^{**}$ satisfying $$ G (a\circ b) = G(a)\circ b + a\circ G(b) - U_{a,b} (\xi ),$$ for every $a,b$ in $\mathcal{J}$. We show how the results on automatic continuity of Jordan derivations from a JB$^*$-algebra $\mathcal{J}$ into itself or into its dual, established by S. Hejazian, A. Niknam \cite{HejNik96} and B. Russo and the second author of this paper in \cite{PeRu}, can be applied to prove that every generalized Jordan derivation from $\mathcal{J}$ into $\mathcal{J}$ or into $\mathcal{J}^*$ is continuous (see Proposition \ref{p automatic cont generalized Jordan derivations}).\smallskip

Section \ref{sec:orthog forms} contains the main results of the paper. In Proposition \ref{prop generalized Jordan derivations define orthogonal forms on the whole JB-algebra} we prove that for every generalized Jordan derivation  $G: \mathcal{J} \to \mathcal{J}^*$, where $\mathcal{J}$ is a JB$^*$-algebra, the form $V_{G} : \mathcal{J}\times \mathcal{J} \to \mathbb{C}$, $V_{G} (a,b) = G(a) (b)$ is orthogonal on the whole $\mathcal{J}$. We introduce the two new subclasses of \emph{purely Jordan generalized derivations} and \emph{Lie Jordan derivations}. A generalized derivation $G: \mathcal{J} \to \mathcal{J}^*$ is said to be a purely Jordan generalized derivation if $G(a) (b) = G(b) (a)$, for every $a,b\in \mathcal{J}$; while a Lie Jordan derivation is a Jordan derivation $D: \mathcal{J} \to \mathcal{J}^*$ satisfying $D(a) (b) =- D(b) (a)$, for all $a,b\in \mathcal{J}$. \smallskip

Denoting by $\mathcal{OF}_{s}(\mathcal{J})$ the Banach space of all symmetric orthogonal forms on $\mathcal{J}$, and by $\mathcal{PJGD}er(\mathcal{J},\mathcal{J}^*)$ the Banach space of all purely Jordan generalized derivations from $\mathcal{J}$ into $\mathcal{J}^*$, then the mappings $$\mathcal{OF}_{s}(\mathcal{J}) \to \mathcal{PJGD}\hbox{er}(\mathcal{J},\mathcal{J}^*), \ \ \mathcal{PJGD}\hbox{er}(\mathcal{J},\mathcal{J}^*) \to \mathcal{OF}_{s}(\mathcal{J}),$$ $$V\mapsto G_{_V}, \ \ \ \  \ \ \ \  \ \ \ \  \ \ \ \  \ \ \ \  \ \ \ \  \ \ \ \ \ \ \ \  \ \ \ \ G\mapsto V_{_G},$$ define two isometric linear bijections and are inverses of each other (cf. Theorem \ref{t one-to-one correspondence between generalized purely Jordan derivations and symmetric orthognal forms}). When $\mathcal{OF}_{as}(\mathcal{J})$ and $\mathcal{L}ie\mathcal{JD}er(\mathcal{J},\mathcal{J}^*)$ denote the Banach spaces of all anti-symmetric orthogonal forms on $\mathcal{J}$, and of all Lie Jordan derivations from $\mathcal{J}$ into $\mathcal{J}^*$, respectively, the mappings
$$\mathcal{OF}_{as}(\mathcal{J}) \to \mathcal{L}\hbox{ie}\mathcal{JD}\hbox{er}(\mathcal{J},\mathcal{J}^*), \ \ \mathcal{L}\hbox{ie}\mathcal{JD}\hbox{er}(\mathcal{J},\mathcal{J}^*) \to \mathcal{OF}_{as}(\mathcal{J}),$$ $$V\mapsto D_{_V}, \ \ \ \  \ \ \ \  \ \ \ \  \ \ \ \  \ \ \ \  \ \ \ \  \ \ \ \ \ \ \ \  \ \ \ \ D\mapsto V_{_D},$$ define two isometric linear bijections and are inverses of each other (see Theorem \ref{t one-to-one correspondence between generalized purely Jordan derivations and anti-symmetric orthognal forms}).\smallskip

We culminate the paper with a short discussion which shows that contrary to what happens for anti-symmetric orthogonal forms on a C$^*$-algebra, the anti-symmetric orthogonal forms on a JB$^*$-algebra are not determined by the inner Jordan derivations from $\mathcal{J}$ into $\mathcal{J}^*$ (see Remark \ref{r there is no Jordan Goldstein theorem}). It seems unnecessary to remark the high impact and deep repercussion that the theory of derivations on C$^*$-algebras and JB$^*$-algebras, the results in this note add a new interest and application of Jordan derivations and generalized Jordan derivations on JB$^*$-algebras. \smallskip

Throughout this paper, we habitually consider a Banach space $X$ as a norm closed subspace of $X^{**}$. Given a  closed subspace $Y$ of $X$, we shall identify the weak$^*$-closure, in $X^{**}$, of $Y$ with $Y^{**}$.

\section{Derivations and generalized derivations in correspondence with orthogonal forms}\label{sec:derivations}

A {\it derivation} from a Banach algebra $A$ into a Banach
$A$-module $X$ is a linear map $D: A\to X$ satisfying $D(a b) = D(a) b +a  D(b),$ ($a\in A$). A {\it Jordan derivation} from $A$ into $X$
is a linear map $D$ satisfying $D(a^2) = a D(a) + D(a)
a,$ ($a\in A$), or equivalently, $D(a\circ b)=a\circ D(b)+ D(a)\circ b$ ($a,b\in A$), where $a\circ b = \frac{a b+b a}{2},$ whenever $a,b\in A$, or one from $\{a,b\}$ is in $A$ and the other is in $X$. Let $x$ be an element in a $X$,
the mapping $\hbox{adj}_{x} :A \to X$, $a\mapsto \hbox{adj}_{x} (a) := x a -  a x$, is an example of a derivation from $A$ into $X$. A derivation $D: A\to X$ is said to be an \emph{inner derivation} when it can be written in the form $D = \hbox{adj}_{x}$ for some $x\in X$.\smallskip

A well known result of S. Sakai (cf. \cite[Theorem 4.1.6]{Sak}) states that every derivation on a von Neumann algebra is inner.\smallskip

J.R. Ringrose proved in \cite{Ringrose72} that every derivation from a C$^*$-algebra $A$ into a Banach $A$-bimodule is continuous.\smallskip

A Banach algebra, $A$, is \emph{amenable} if every bounded derivation from $A$ into a dual Banach A-bimodule is inner. Contributions of A. Connes and U. Haagerup show that a C$^*$-algebra is amenable if and only if it is nuclear (\cite{Connes,Haa83}). The class of weakly amenable C$^*$-algebras is less restrictive.
A Banach algebra, $A$, is \emph{weakly amenable} if every bounded derivation from $A$ into $A^*$ is inner. U. Haagerup proved that every C$^*$-algebra $B$ is
weakly amenable, that is, for every derivation $D: B\to B^*$, there exists $\varphi\in B^*$ satisfying $D(.) = \hbox{adj}_{\varphi}$ (\cite[Corollary 4.2]{Haa83}).\smallskip

In \cite{LiPan} J. Li and Zh. Pan introduce a concept which generalizes the notion of derivation and is more related to the Jordan structure underlying a C$^*$-algebra. We recall that a linear mapping $G$ from a unital C$^*$-algebra $A$ to a (unital) Banach $A$-bimodule $X$ is called a \emph{generalized derivation} in \cite{LiPan} whenever the identity $$G (ab) = G(a) b + a G(b) - a G(1) b$$ holds for every $a,b$ in $A$. The non-unital case was studied in \cite[\S 4]{AlBreExVill09}, where a generalized derivation from a Banach algebra $A$ to a Banach $A$-bimodule $X$ is defined as a linear operator $D: A \to X$ for which there exists $\xi\in  X^{**}$ satisfying $$D(ab) = D(a)  b + a  D(b) - a \xi b \hbox{ ($a, b \in A$).}$$

Given an element $x$ in $X$, it is easy to see that the operator $G_{x} :A \to X$, $x\mapsto G_{x} (a):= a x + x a$, is a generalized derivation from $B$ into $X$. Since, in this setting, every derivation $D: A\to X$ satisfies $D(1) = 2 1 D(1)$ and hence $D(1) =0$, it follows that every derivation from $A$ into $X$ is a generalized derivation. There are examples of generalized derivations from a C$^*$-algebra $A$ into a Banach $A$-bimodule $X$ which are not derivations, for example $G_a : A \to A$ is a generalized derivation which is not a derivation when $a^* \neq -a$ (cf. \cite[comments after Lemma 3]{BurFerGarPe2014}).\smallskip

\subsection{Jordan algebras and modules}\label{subsec: Jordan modules}

We turn now our attention to Jordan structures and derivations. We recall that a  real (resp., complex) \emph{Jordan algebra} is a commutative algebra over the real (resp., complex) field which is not, in general associative, but satisfies the \emph{Jordan identity}:\begin{equation}\label{eq Jordan idenity algebra} (a \circ b)\circ a^2 = a\circ (b \circ a^2).
\end{equation} A normed Jordan algebra is a Jordan algebra $\mathcal{J}$ equipped with a norm, $\|.\|$, satisfying $\| a\circ b\| \leq \|a\| \ \|b\|$, $a,b\in \mathcal{J}$. A \emph{Jordan Banach algebra} is a normed Jordan algebra whose norm is complete. A JB$^*$-algebra is a complex Jordan Banach algebra $\mathcal{J}$ equipped with an isometric algebra involution $^*$ satisfying  $\|\J a{a^*}a \|= \|a\|^3$, $a\in \mathcal{J}$  (we recall that $\J a{a^*}a  =2 (a\circ a^*) \circ a - a^2 \circ a^*$). A real Jordan Banach algebra $\mathcal{J}$ satisfying $$\|a\|^2 = \|a^2\| \hbox{ and, } \|a^2\|\leq \|a^2+b^2\|,$$ for every $a,b\in \mathcal{J}$ is called a \emph{JB-algebra}. JB-algebras are precisely the self adjoint parts of JB$^*$-algebras \cite{Wright77}. A JBW$^*$-algebra is a JB$^*$-algebra which is a dual Banach space (see \cite[\S 4]{Hanche} for a detailed presentation with basic properties). \smallskip

Every real or complex associative Banach algebra is a real or complex Jordan Banach algebra with respect to the natural Jordan product $a\circ b = \frac12 (a b +ba)$.\smallskip

Let $\mathcal{J}$ be a Jordan algebra. A \emph{Jordan $\mathcal{J}$-module} is a vector space $X$, equipped with a couple of bilinear products
$(a,x)\mapsto a \circ x$ and $(x,a)\mapsto x \circ a$ from $\mathcal{J}\times X$ to $X$, satisfying: \begin{equation}\label{axioms of Jordan module 1} a \circ x = x\circ a,\ \ a^2 \circ (x \circ a) = (a^2\circ  x)\circ a, \hbox{ and, }
\end{equation}
\begin{equation}\label{axioms of Jordan module 2} 2((x\circ a)\circ  b) \circ a + x\circ (a^2 \circ b) = 2 (x\circ a)\circ  (a\circ b) + (x\circ b)\circ a^2,
 \end{equation}
for every $a,b\in \mathcal{J}$ and $x\in X$ (see  \cite[\S II.5,p.82]{Jac} for the basic facts and definitions of Jordan modules). When $X$ is a Banach space and a Jordan $\mathcal{J}$-module for which there exists $m\geq 0$ satisfying $\|a\circ x\|\leq M\ \|a\| \ \|x\|,$ we say that $X$ is a Jordan-Banach $\mathcal{J}$-module. For example, every associative Banach $A$-bimodule over a Banach algebra $A$ is a Jordan-Banach $A$-module for the product $a\circ x = \frac12 (a x + x a)$ ($a\in A$, $x\in X$). The dual, $\mathcal{J}^*$, of a Jordan Banach algebra $\mathcal{J}$ is a Jordan-Banach $J$-module with respect to the product \begin{equation}\label{eq Jordan module structure in the dual} (a\circ \varphi ) (b) = \varphi (a\circ b),
 \end{equation} where $a,b\in \mathcal{J}$, $\varphi\in \mathcal{J}^*$.\smallskip

Given a Banach $A$-bimodule $X$ over a C$^*$-algebra $A$ (respectively, a Jordan Banach $\mathcal{J}$-module over a JB$^*$-algebra $\mathcal{J}$), it is very useful to consider $X^{**}$ as a Banach $A$-bimodule or as a Banach $A^{**}$-bimodule (respectively, as a Jordan Banach $\mathcal{J}$-module or as a Jordan Banach $\mathcal{J}^{**}$-module). The case of Banach bimodules over C$^*$-algebras is very well treated in the literature (see \cite{Dales00} or \cite[\S 3]{BurFerPe2013}), we recall here the basic facts: Let $X$, $Y$ and $Z$ be Banach spaces and let $m : X\times Y \to Z$ be a bounded bilinear mapping. Defining $m^* (z^\prime,x) (y) := z^\prime (m(x,y))$ $(x\in X, y\in Y, z^\prime\in Z^*)$, we obtain a bounded bilinear mapping $m^*: Z^*\times X \to Y^*.$ Iterating the process, we define a mapping $m^{***}: X^{**}\times Y^{**} \to Z^{**}.$ The mapping $x^{\prime\prime}\mapsto  m^{***}(x^{\prime\prime} , y^{\prime\prime})$ is weak$^*$ to weak$^*$ continuous whenever we fix  $y^{\prime\prime} \in  Y^{**}$, and the mapping $y^{\prime\prime}\mapsto  m^{***}(x, y^{\prime\prime})$ is weak$^*$ to weak$^*$ continuous for every $x\in  X$. One can consider the transposed mapping $m^{t} : Y\times X\to Z,$ $m^{t} (y,x) = m(x,y)$ and the extended mapping $m^{t***t}: X^{**}\times Y^{**} \to Z^{**}.$ In this case, the mapping $x^{\prime\prime}\mapsto  m^{t***t}(x^{\prime\prime} , y)$ is weak$^*$ to weak$^*$ continuous whenever we fix  $y \in  Y$, and the mapping $y^{\prime\prime}\mapsto  m^{t***t}(x^{\prime\prime}, y^{\prime\prime})$ is weak$^*$ to weak$^*$ continuous for every $x^{\prime\prime}\in  X^{**}$.\smallskip

In general, the mappings $m^{t***t}$ and $m^{***}$ do not coincide (cf. \cite{Arens51}). When $m^{t***t}=m^{***},$ we say that $m$ is Arens regular. When $m$ is Arens regular, its (unique) third Arens transpose $m^{***}$ is separately weak$^*$ continuous (see \cite[Theorem 3.3]{Arens51}). It is well known that the product of every C$^*$-algebra $A$ is Arens regular and the unique Arens extension of the product of $A$ to $A^{**}\times A^{**}$ coincides with the product of its enveloping von Neumann algebra (cf. \cite[Corollary 3.2.37]{Dales00}). Combining \cite[Theorem 3.3]{Arens51} with \cite[Theorem 4.4.3]{Hanche}, we can deduce that the product of every JB$^*$-algebra $\mathcal{J}$ is Arens regular and the unique Arens extension of the product of $\mathcal{J}$ to $\mathcal{J}^{**}\times \mathcal{J}^{**}$ coincides with the product of $\mathcal{J}^{**}$ given by \cite[Theorem 4.4.3]{Hanche}. The literature contains some other results assuring that certain bilinear operators are Arens regular. For example, if every operator from $X$ into $Y^*$ is weakly compact and the same property holds for every operator from $Y$ into $X^*$,  then it follows from \cite[Theorem 1]{BomVi} that every bounded bilinear mapping $m : X\times Y \to Z$ is Arens regular. It is known that every bounded operator from a JB$^*$-algebra into the dual of another JB$^*$-algebra is weakly compact (cf. \cite[Corollary 3]{ChuIoLo}), thus given a JB$^*$-algebra $\mathcal{J},$ every bilinear mapping $m: \mathcal{J} \times \mathcal{J}\to Z$ is Arens regular.\smallskip

Let $X$ be a Banach $A$-bimodule over a C$^*$-algebra $A$.  Let us denote by $$\pi_1: A\times X \to X,\hbox{ and } \pi_2: X\times A \to X,$$ the bilinear maps given by the corresponding module operations, that is, $\pi_1(a,x) = a x$, and $\pi_2(x,a) = x a$, respectively. The third Arens bitransposes $\pi_1^{***}: A^{**}\times X^{**} \to X^{**}$, and $\pi_2^{***}: X^{**}\times A^{**} \to X^{**}$ satisfy that $\pi_1^{***}(a,x)$ defines a weak$^*$ to weak$^*$ linear operator whenever we fix  $x \in  X^{**}$, or whenever we fix $a\in A$, respectively, while $\pi_2^{***}(x,a)$ defines a weak$^*$ to weak$^*$ linear operator whenever we fix  $x\in  X$, and $a\in A^{**}$, respectively. From now on, given $a\in A^{**}$, $z\in X^{**},$ $b\in \mathcal{J}$ and $y\in Y^{**}$, we shall frequently  write $a z = \pi_1^{***} (a,z)$, $z a = \pi_2^{***} (z,a)$, and $b\circ y = \pi^{***} (b,y)$, respectively.
Let $(a_\lambda)$, and $(x_\mu)$ be nets in $A$ and $X$, such that $a_\lambda \to a\in A^{**}$, and  $x_\mu\to x\in X^{**}$, in the respective weak$^*$ topologies. It follows from the above properties that \begin{equation}\label{eq product bidual module} \pi_1^{***}(a,x) = \lim_{\lambda} \lim_{\mu} a_\lambda x_\mu, \hbox{ and } \pi_2^{***} (x,a) =  \lim_{\mu} \lim_{\lambda}  x_\mu a_\lambda, \end{equation} in the weak$^*$ topology of $X^{**}$. It follows from above properties that $X^{**}$ is a Banach $A^{**}$-bimodule for the above operations (cf. \cite[Theorem 2.6.15$(iii)$]{Dales00}).\smallskip

In the Jordan setting, we do not know any reference asserting that the bidual $Y^{**}$ of a Jordan Banach $\mathcal{J}$-module $Y$ over a JB$^*$-algebra $\mathcal{J}$ is a Jordan Banach $\mathcal{J}^{**}$-module, this is for the moment an open problem. However, in the particular case of $Y= J^{*}$, it is quite easy and natural to check that $J^{***}$ is a  Jordan Banach $\mathcal{J}^{**}$-module with respecto to the product defined in \eqref{eq Jordan module structure in the dual}. In nay case, there exists an alternative method, given $\varphi\in Y^*$ and $a\in \mathcal{J}$ let us define $\varphi \circ a = a\circ \varphi \in Y^*$, as the functional determined by $(\varphi \circ a) (y) := \varphi (a \circ y)$ ($y\in Y$). Iterating the process we show that $Y^{**}$ is a Jordan Banach $\mathcal{J}$-module.\smallskip

\subsection{Jordan derivations}

Let $X$ be a Jordan-Banach module over a Jordan Banach algebra $\mathcal{J}$. A \emph{Jordan derivation} from $\mathcal{J}$ into $X$ is a linear map $D: \mathcal{J}\to X$ satisfying: $$D(a\circ b ) = D(a)\circ b + a \circ D(b).$$ Following standard notation, given $x\in X$ and $a\in \mathcal{J}$, the symbols $L(a)$ and $L(x)$ will denote the mappings $L(a) : X\to X$, $x\mapsto L(a) (x) =  a\circ x$ and $L(x): \mathcal{J}\to X$, $a \mapsto L(x)(a) = a\circ x$. By a little abuse of notation, we also denote by $L(a)$ the operator on $\mathcal{J}$ defined by $L(a) (b)  = a\circ b$. Examples of Jordan derivations can be given as follows: if we fix $a\in \mathcal{J}$ and $x\in X$, the mapping $$[L(x), L(a)] = L(x) L(a) -L(a) L(x) : \mathcal{J} \to X, \ b\mapsto [L(x), L(a)] (b),$$ is a Jordan derivation. A derivation $D: \mathcal{J} \to X$ which writes in the form $D= \sum_{i=1}^{m} \left(L(x_i)L(a_i)-L(a_i)L(x_i)\right)$, ($x_i\in X, a_i\in \mathcal{J}$) is called an \emph{inner derivation}. \smallskip

In 1996, B.E. Johnson proved that every bounded Jordan derivation from a
C$^*$-algebra $A$ to a Banach $A$-bimodule is a derivation (cf. \cite{John96}).
B. Russo and the second author of this paper showed that every Jordan derivation
from a C$^*$-algebra $A$ to a Banach $A$-bimodule or to a Jordan Banach $A$-module
is continuous (cf. \cite[Corollary 17]{PeRu}). Actually every Jordan derivation from a JB$^*$-algebra $\mathcal{J}$ into $\mathcal{J}$ or into $\mathcal{J}^*$ is continuous (cf. \cite[Corollary 2.3]{HejNik96} and also \cite[Corollary 10]{PeRu}).\smallskip

Contrary to Sakai's theorem, which affirms that every derivation on a von Neumann algebra is inner \cite[Theorem 4.1.6]{Sak}, there exist examples of JBW$^*$-algebras admitting non-inner derivations (cf. \cite[Theorem 3.5 and Example 3.7]{Up80}). Following \cite{HoPerRusQJM}, a JB$^*$-algebra $\mathcal{J}$ is said to be \emph{Jordan weakly amenable}, if every (bounded) derivation from $\mathcal{J}$ into $\mathcal{J}^*$ is inner. Another difference between C$^*$-algebras and JB$^*$-algebras is that Jordan algebras do not exhibit a good behaviour concerning Jordan weak amenability; for example $L(H)$ and $K(H)$ are not Jordan weakly amenable when $H$ is an infinite dimensional complex Hilbert space (cf. \cite[Lemmas 4.1 and 4.3]{HoPerRusQJM}). Jordan weak amenability is deeply connected with the more general notion of ternary weak amenability (see \cite{HoPerRusQJM}). More interesting results on ternary weak amenability were recently developed by R. Pluta and B. Russo in \cite{PluRu}.\smallskip

Let us assume that $\mathcal{J}$ and $X$ are unital. Following \cite{BurFerGarPe2014}, a linear mapping $G: \mathcal{J}\to X$ will be called a \emph{generalised Jordan derivation} whenever $$G (a\circ b) = G(a)\circ b + a\circ G(b) - U_{a,b} G(1),$$ for every $a,b$ in $\mathcal{J}$, where $U_{a,b} (x) := (a\circ x) \circ b + (b\circ x)\circ a - (a\circ b) \circ x$ ($x\in \mathcal{J}$ or $x\in X$). Following standard notation, given an element $a$ in a JB$^*$-algebra $\mathcal{J}$, the mapping $U_{a,a}$ is usually denoted by $U_a$.  Every generalized (Jordan) derivation $G : A\to X$  with $G(1) =0$ is a Jordan derivation. Every Jordan derivation $D: \mathcal{J}\to X$ satisfies $D(1) = 0$, and hence $D$ is a generalized derivation. For each $x\in X$, the mapping $L(x) : \mathcal{J}\to X$ is a generalized derivation, and, as in the associative setting, there are examples of generalized derivations which are not derivation (cf. \cite[comments after Lemma 3]{BurFerGarPe2014}). In the not necessarily unital case, a linear mapping $G: \mathcal{J}\to X$ will be called a \emph{generalized Jordan derivation} if there exists $\xi\in X^{**}$ satisfying \begin{equation}\label{eq generalized Jordan derivation} G (a\circ b) = G(a)\circ b + a\circ G(b) - U_{a,b} (\xi ),
 \end{equation} for every $a,b$ in $\mathcal{J}$ (this definition was introduced in \cite[\S 4]{AlBreExVill09} and in \cite[\S 3]{BurFerPe2013}).\smallskip

Let $\mathcal{J}$ be a JB$^*$-algebra and let $Y$ denote $\mathcal{J}$ or $\mathcal{J}^*$, regarded as a Jordan Banach $\mathcal{J}$-module. Suppose $G: \mathcal{J} \to Y$ is a generalized derivation, and let $\xi\in Y^{**}$ denote the element for which \eqref{eq generalized Jordan derivation} holds. As we have commented before, $L(\xi) : \mathcal{J} \to Y^{**}$ is a generalized Jordan derivation. If we regard $G$ as a linear mapping from $\mathcal{J}$ into $Y^{**},$ it is not hard to check that $\widetilde{G} = G- L(\xi) : \mathcal{J}\to Y^{**}$ is a Jordan derivation. Corollary 2.3 in \cite{HejNik96} implies that $\widetilde{G}$ is continuous. If in the setting of C$^*$-algebras we replace \cite[Corollary 2.3]{HejNik96} with \cite[Corollary 17]{PeRu}, then the above arguments remain valid to obtain:

\begin{proposition}\label{p automatic cont generalized Jordan derivations} Every generalized Jordan derivation from a JB$^*$-algebra $\mathcal{J}$ into itself or into $\mathcal{J}^*$ is continuous. Furthermore, every generalized derivation from a C$^*$-algebra $A$ into a Banach $A$-bimodule is continuous. $\hfill\Box$
\end{proposition}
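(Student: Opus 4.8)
The plan is to reduce the continuity of a generalized Jordan derivation to the already-established automatic continuity of genuine Jordan derivations, by subtracting off an explicit, manifestly bounded piece. Fix a JB$^*$-algebra $\mathcal{J}$, let $Y$ denote $\mathcal{J}$ or $\mathcal{J}^*$ regarded as a Jordan Banach $\mathcal{J}$-module, and let $G:\mathcal{J}\to Y$ be a generalized Jordan derivation with associated element $\xi\in Y^{**}$, so that \eqref{eq generalized Jordan derivation} holds. Since $Y^{**}$ is itself a Jordan Banach $\mathcal{J}$-module (by the iterated-transpose construction recalled in Subsection \ref{subsec: Jordan modules}), and since for each fixed element $z$ of a Jordan Banach $\mathcal{J}$-module the operator $L(z)$ is a generalized derivation, the map $L(\xi):\mathcal{J}\to Y^{**}$ is a generalized Jordan derivation whose defining element is exactly the same $\xi$.

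First I would verify that $\widetilde{G}:=G-L(\xi):\mathcal{J}\to Y^{**}$ is a genuine Jordan derivation. This is a short computation: subtracting the defining identity \eqref{eq generalized Jordan derivation} for $L(\xi)$ from that for $G$, the two occurrences of $U_{a,b}(\xi)$ cancel, leaving
\begin{equation*} \widetilde{G}(a\circ b) = \widetilde{G}(a)\circ b + a\circ \widetilde{G}(b), \quad a,b\in\mathcal{J}. \end{equation*}
The only point worth checking is that $L(\xi)$ genuinely satisfies \eqref{eq generalized Jordan derivation} with this $\xi$; expanding $U_{a,b}(\xi)=(a\circ\xi)\circ b+(b\circ\xi)\circ a-(a\circ b)\circ\xi$ and using the commutativity $a\circ x=x\circ a$ of the module action, the cross terms telescope and the required identity drops out.

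Next I would invoke automatic continuity. The operator $L(\xi)$ is bounded, with norm controlled by $\|\xi\|$, so $G$ will be continuous as soon as $\widetilde{G}$ is. In the JB$^*$ case $\widetilde{G}$ is a Jordan derivation from $\mathcal{J}$ into the Jordan Banach $\mathcal{J}$-module $Y^{**}$ (namely $\mathcal{J}^{**}$ or $\mathcal{J}^{***}$), and \cite[Corollary 2.3]{HejNik96} gives that $\widetilde{G}$ is continuous. In the C$^*$ case one argues identically, replacing that reference by \cite[Corollary 17]{PeRu}; here a direct expansion shows that an associative generalized derivation $D(ab)=D(a)b+aD(b)-a\xi b$ is a generalized Jordan derivation with the same $\xi$ (so the same subtraction applies), and $X^{**}$ is a Banach $A$-bimodule, hence a Jordan Banach $A$-module. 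Finally, since $G$ takes values in the isometrically embedded submodule $Y\subseteq Y^{**}$, continuity of $G:\mathcal{J}\to Y^{**}$ yields continuity of $G:\mathcal{J}\to Y$.

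The main obstacle is this second step: confirming that the cited automatic-continuity theorems genuinely cover the target $Y^{**}$, i.e. that a Jordan derivation landing in the bidual module $\mathcal{J}^{**}$ or $\mathcal{J}^{***}$ (rather than in $\mathcal{J}$ or $\mathcal{J}^*$ themselves) is still forced to be continuous. This is precisely where one must use that $Y^{**}$ is a bona fide Jordan Banach $\mathcal{J}$-module and that the continuity results are available at that level of generality; by comparison, the purely algebraic reduction of the first step is routine.
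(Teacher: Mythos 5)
Your proposal reproduces the paper's own argument: the paper likewise regards $G$ as a mapping into $Y^{**}$, checks that $\widetilde{G}=G-L(\xi)$ is a genuine Jordan derivation, and deduces continuity from \cite[Corollary 2.3]{HejNik96} in the JB$^*$-case and from \cite[Corollary 17]{PeRu} in the C$^*$-case (where the bimodule is viewed as a Jordan module via $a\circ x=\frac12(ax+xa)$). Even the caveat you flag, namely that the cited automatic-continuity results must cover a Jordan derivation landing in the bidual module $Y^{**}$, is precisely the level of generality on which the paper silently relies, so your proof is correct and essentially identical to the paper's.
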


A consequence of the result established by T. Ho, B. Russo and the second author of this note in \cite[Proposition 2.1]{HoPerRusQJM} implies that for every Jordan derivation $D$ from a JB$^*$-algebra $\mathcal{J}$ into its dual, its bitranspose $D^{**} : \mathcal{J}^{**} \to \mathcal{J}^{***}$ is a Jordan derivation and $D^{**} (\mathcal{J}^{**}) \subseteq \mathcal{J}^{*}$. A similar technique is valid to prove the following:

\begin{proposition}\label{p bidual extensions}
Let $\mathcal{J}$ be a JB-algebra or a JB$^*$-algebra, and suppose that $G : \mathcal{J} \to \mathcal{J}^*$ is a generalized Jordan derivation (respectively, a Jordan derivation). Then $G^{**} : \mathcal{J}^{**} \to \mathcal{J}^{***}$ is a weak$^*$-continuous generalized Jordan derivation  (respectively, Jordan derivation) satisfying $G^{**} (\mathcal{J}^{**}) \subseteq \mathcal{J}^{*}$.
\end{proposition}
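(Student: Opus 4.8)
The plan is to reduce everything to the Jordan derivation case, which is already the consequence of \cite[Proposition 2.1]{HoPerRusQJM} quoted just before the statement, by treating a generalized Jordan derivation as a perturbation of a genuine Jordan derivation by a module multiplication operator. I shall concentrate on the case in which $\mathcal{J}$ is a JB$^*$-algebra and $G$ is a generalized Jordan derivation, since the Jordan derivation assertion is exactly the cited result, while the real case of a JB-algebra $\mathcal{J}$ can be recovered by passing to the complexification $\mathcal{J}_{\mathbb{C}}$, which is a JB$^*$-algebra, and extending $G$ complex-linearly. First I would record that, by Proposition \ref{p automatic cont generalized Jordan derivations}, $G$ is continuous, so that its bitranspose $G^{**} : \mathcal{J}^{**} \to \mathcal{J}^{***}$ is a well-defined bounded operator which, being a bitranspose, is automatically weak$^*$-to-weak$^*$ continuous.

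The first key step is to arrange that the witnessing element $\xi\in \mathcal{J}^{***}$ from \eqref{eq generalized Jordan derivation} can be taken in $\mathcal{J}^*$. Writing $P : \mathcal{J}^{***} \to \mathcal{J}^*$ for the canonical contractive projection given by restriction to $\mathcal{J}$, whose kernel is the annihilator $\mathcal{J}^{\circ}$ of $\mathcal{J}$ in $\mathcal{J}^{***}$, I claim that $U_{a,b}(\xi) = U_{a,b}(\eta)$ for all $a,b \in \mathcal{J}$, where $\eta := P(\xi) \in \mathcal{J}^*$. Indeed $\xi - \eta \in \mathcal{J}^{\circ}$, and for $\psi \in \mathcal{J}^{\circ}$ and $a,b,c \in \mathcal{J}$ a direct computation with the module action on $\mathcal{J}^*$ yields $U_{a,b}(\psi)(c) = \psi\big(a\circ(b\circ c)\big) + \psi\big(b\circ(a\circ c)\big) - \psi\big((a\circ b)\circ c\big) = 0$, because all three arguments lie in $\mathcal{J}$; since both $U_{a,b}(\xi)$ and $U_{a,b}(\eta)$ already belong to $\mathcal{J}^*$ (the former because the remaining terms of \eqref{eq generalized Jordan derivation} do), agreement on $\mathcal{J}$ forces equality. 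Consequently $\widetilde{G} := G - L(\eta) : \mathcal{J} \to \mathcal{J}^*$ is a genuine Jordan derivation \emph{into $\mathcal{J}^*$}, and $G = \widetilde{G} + L(\eta)$ is a decomposition in which both summands map into $\mathcal{J}^*$; this is the improvement over the decomposition recorded before the statement, where the correction term only landed in the bidual.

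Given this decomposition I would finish as follows. For the Jordan derivation $\widetilde{G}$, the cited consequence of \cite[Proposition 2.1]{HoPerRusQJM} gives that $\widetilde{G}^{**}$ is a weak$^*$-continuous Jordan derivation with $\widetilde{G}^{**}(\mathcal{J}^{**}) \subseteq \mathcal{J}^*$. For the module multiplication operator $L(\eta)$ with $\eta \in \mathcal{J}^*$, I would check that $L(\eta)^{**}(z) = z \circ \eta$ for $z \in \mathcal{J}^{**}$ and that this element lies in $\mathcal{J}^*$: using the Arens regularity of the Jordan product recalled in Section \ref{sec:derivations} (so that $w \mapsto z \circ w$ is weak$^*$ continuous on $\mathcal{J}^{**}$) together with $\eta \in \mathcal{J}^*$, the functional $w \mapsto \langle z\circ w, \eta\rangle$ is weak$^*$ continuous, whence $z\circ \eta \in \mathcal{J}^*$; moreover $z \mapsto z\circ \eta$ is a generalized Jordan derivation on the unital algebra $\mathcal{J}^{**}$. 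Adding the two pieces shows that $G^{**} = \widetilde{G}^{**} + L(\eta)^{**}$ is a weak$^*$-continuous generalized Jordan derivation with $G^{**}(\mathcal{J}^{**}) \subseteq \mathcal{J}^*$, as required.

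The main obstacle, and the only place where the Jordan structure is genuinely used, is the range inclusion $G^{**}(\mathcal{J}^{**}) \subseteq \mathcal{J}^*$: weak$^*$ limits of bounded nets from $\mathcal{J}^*$ need not remain in $\mathcal{J}^*$, so this cannot follow from continuity alone and must be extracted from the derivation identity. My strategy outsources that analytic content to the cited result through the reduction $G = \widetilde{G} + L(\eta)$, so the points genuinely demanding care are exactly the verification that $\xi$ may be replaced by $\eta\in\mathcal{J}^*$ (the computation above) and the separate weak$^*$ continuity used to identify $L(\eta)^{**}$, both of which rest on the Arens regularity of $\mathcal{J}$.
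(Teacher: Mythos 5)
Your proof is correct, but it takes a genuinely different route from the paper's. The paper obtains the range inclusion $G^{**}(\mathcal{J}^{**})\subseteq \mathcal{J}^{*}$ in one line and with no decomposition whatsoever: every bounded operator from a JB$^*$-algebra into the dual of a JB$^*$-algebra is automatically weakly compact \cite[Corollary 3]{ChuIoLo}, and weak compactness of $T$ is equivalent to $T^{**}$ mapping the bidual into the target space \cite[Lemma 2.13.1]{HiPhi}; in particular your closing assessment --- that the inclusion ``cannot follow from continuity alone and must be extracted from the derivation identity'' --- is mistaken in this setting, since boundedness alone suffices, and this is precisely how both the paper and, underneath, the consequence of \cite[Proposition 2.1]{HoPerRusQJM} that you invoke obtain it. The paper then proves the algebraic identity directly, keeping the original $\xi\in\mathcal{J}^{***}$ and running a two-stage Goldstine/iterated weak$^*$-limit argument, first in the variable $a$ with $c\in\mathcal{J}$ fixed and then in $b$. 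Your route --- replace $\xi$ by $\eta=\xi|_{\mathcal{J}}\in\mathcal{J}^{*}$ via the computation showing $U_{a,b}(\psi)$ annihilates $\mathcal{J}$ whenever $\psi$ does (for $a,b\in\mathcal{J}$), split $G=\widetilde{G}+L(\eta)$ with \emph{both} summands landing in $\mathcal{J}^{*}$, outsource $\widetilde{G}$ to the quoted bidual result for Jordan derivations, and identify $L(\eta)^{**}(z)=z\circ\eta\in\mathcal{J}^{*}$ via Arens regularity --- is sound; in particular your normalization step is watertight, since $U_{a,b}(\xi)=G(a)\circ b+a\circ G(b)-G(a\circ b)\in\mathcal{J}^{*}$ by rearranging \eqref{eq generalized Jordan derivation}, so agreement with $U_{a,b}(\eta)$ on $\mathcal{J}$ does force equality in $\mathcal{J}^{***}$. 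Your version buys two things. First, the fact that the witness may be taken in $\mathcal{J}^{*}$ is recorded in the paper only a posteriori (Remark \ref{r element xi in a gen Jordan der}, via $G^{**}(1)=\xi$), whereas you establish it up front. Second, and more substantially, it quietly smooths over a delicate point in the paper's own limit argument: the step asserting that $U_{a_{\lambda},c}(\xi)$ converges weak$^*$ to $U_{a,c}(\xi)$ amounts to pairing the fixed functional $\xi$ against the nets $U_{a_{\lambda},c}(z)$, which converge only in the topology $\sigma(\mathcal{J}^{**},\mathcal{J}^{*})$; this passage to the limit is immediate when $\xi\in\mathcal{J}^{*}$ --- exactly what your normalization guarantees --- but requires extra justification for a general $\xi\in\mathcal{J}^{***}$. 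What the paper's approach buys in exchange is self-containedness and uniformity: it does not use \cite[Proposition 2.1]{HoPerRusQJM} as a black box (it only claims ``a similar technique''), its weak-compactness argument yields the range inclusion for $G$ itself without any decomposition, and the same double-limit computation delivers the derivation and generalized-derivation cases simultaneously.
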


\begin{proof} Suppose first that $\mathcal{J}$ be a JB-algebra. It is known that $\widehat{\mathcal{J}}=\mathcal{J}+i \mathcal{J}$ can be equipped with a structure of JB$^*$-algebra such that $\widehat{\mathcal{J}}_{sa}=\mathcal{J}$ (cf. \cite{Wright77}). It is easy to check that, given a generalized Jordan derivation  $G : \mathcal{J} \to \mathcal{J}^*$, the mapping $\widehat{G} : \widehat{\mathcal{J}}\to \widehat{\mathcal{J}}^*$, $\widehat{G} (a+ i b) = G(a) + i G(b)$ ($a,b\in \mathcal{J}$) defines a generalized Jordan derivation on $\widehat{\mathcal{J}}$, where, as usually, for $\varphi\in \mathcal{J}^*$, we regard $\varphi: \widehat{\mathcal{J}} \to \mathbb{C}$ by defining $\varphi (a+i b) = \varphi (a)+ i \varphi (b)$. We may therefore assume that  $\mathcal{J}$ is a  JB$^*$-algebra.\smallskip

By Proposition \ref{p automatic cont generalized Jordan derivations}, every generalized Jordan derivation $G : \mathcal{J} \to \mathcal{J}^*$ is automatically continuous. Furthermore, since every bounded operator from a JB$^*$-algebra into the dual of another JB$^*$-algebra is weakly compact (cf. \cite[Corollary 3]{ChuIoLo}), we deduce from \cite[Lemma 2.13.1]{HiPhi} that $G^{**} (\mathcal{J}^{**}) \subset \mathcal{J}^{*}$.\smallskip

Since $G : \mathcal{J} \to \mathcal{J}^*$ is a generalized Jordan derivation, there exists $\xi\in \mathcal{J}^{***}$ satisfying $$G (x\circ y) = G(x)\circ y + x\circ G(y) - U_{x,y} (\xi ),$$ for every $x,y$ in $\mathcal{J}$. Let $a$ and $b$ be elements in $\mathcal{J}^{**}$. By Goldstine's Theorem, we can find two (bounded) nets $(a_\lambda)$ and $(b_\mu)$ in $\mathcal{J}$ such that $(a_\lambda)\to a$ and $(b_\mu)\to b$  in the weak$^*$-topology of $\mathcal{J}^{**}$. If we fix an element $c$ in $\mathcal{J}^{**}$, and we take a net $(\phi_{\lambda})$ in $\mathcal{J}^{***}$,
converging to some $\phi\in \mathcal{J}^{***}$ in the $\sigma (\mathcal{J}^{***},\mathcal{J}^{**})$-topology,
the net $({\phi_{\lambda}}\circ c)$ converges in the $\sigma (\mathcal{J}^{***},\mathcal{J}^{**})$-topology to ${\phi}\circ c$.
The weak$^*$-continuity of the mapping $G^{**}$ implies that
$$G^{**} (a \circ c) = \hbox{w$^*$-}\lim_{\lambda} G (a_{\lambda} \circ c) = \hbox{w$^*$-}\lim_{\lambda} G(a_{\lambda})\circ c + a_{\lambda} \circ G(c) - U_{a_{\lambda},c} (\xi )$$
$$= G^{**} (a)\circ c + a \circ G(c) - U_{a,c} (\xi ),$$ for every $c\in \mathcal{J}$. This shows that  $G^{**} (a \circ c) =  G^{**} (a)\circ c + a \circ G(c) - U_{a,c} (\xi ),$ for every $c\in \mathcal{J}$, $a\in \mathcal{J}^{**}$. Therefore $$G^{**} (a\circ b)= \hbox{w$^*$-}\lim_{\mu} G^{**} (a\circ b_{\mu}) = \hbox{w$^*$-}\lim_{\mu}   G^{**} (a)\circ b_{\mu} + a \circ G(b_{\mu}) - U_{a,b_{\mu}} (\xi )$$ $$=G^{**} (a)\circ b + a \circ G^{**}(b) - U_{a,b} (\xi ),$$ giving the desired conclusion.
\end{proof}

\begin{remark}\label{r element xi in a gen Jordan der} Let $G : \mathcal{J} \to \mathcal{J}^*$ be a generalized Jordan derivation, where $\mathcal{J}$ in a JB$^*$-algebra. Let $\xi\in \mathcal{J}^{***}$ satisfying $$ G (a\circ b) = G(a)\circ b + a\circ G(b) - U_{a,b} (\xi ),$$ for every $a,b$ in $\mathcal{J}$. The previous Proposition \ref{p bidual extensions} assures that $G^{**} : \mathcal{J}^{**} \to \mathcal{J}^{***}$ is a weak$^*$-continuous generalized Jordan derivation, $G^{**} (\mathcal{J}^{**}) \subseteq \mathcal{J}^{*}$, and $$ G^{**} (a\circ b) = G^{**}(a)\circ b + a\circ G^{**}(b) - U_{a,b} (\xi ),$$ for every $a,b$ in $\mathcal{J}^{**}$. In particular, $G^{**} (1) = \xi \in \mathcal{J}^*$, and $G$ is a Jordan derivation if and only if $G^{**} (1) =0$.
\end{remark}

\section{Orthogonal forms}\label{sec:orthog forms}

In the non-associative setting of JB$^*$-algebras, a Jordan version of Goldstein's theorem remains unexplored. In this section we shall study the structure of the orthogonal forms on a JB$^*$-algebra  $\mathcal{J}$. In this non-associative setting, the lacking of a Jordan version of Goldstein's theorem makes, a priori, unclear why a form on $\mathcal{J}$ which is orthogonal on $\mathcal{J}_{sa}$ is orthogonal on the whole $\mathcal{J}$. We shall prove that symmetric orthogonal forms on a JB$^*$-algebra $\mathcal{J}$ are in one to one correspondence with the \emph{purely Jordan generalized derivations} from $\mathcal{J}$ into $\mathcal{J}^{*}$ (see Theorem \ref{t one-to-one correspondence between generalized purely Jordan derivations and symmetric orthognal forms}), while anti-symmetric orthogonal forms on $\mathcal{J}$ are in one to one correspondence with the \emph{Lie Jordan derivations} from $\mathcal{J}$ into $\mathcal{J}^{*}$ (see Theorem \ref{t one-to-one correspondence between generalized purely Jordan derivations and anti-symmetric orthognal forms}).
These results, together with the existence of JB$^*$-algebras $\mathcal{J}$ which are not Jordan weakly amenable (i.e., they admit Jordan derivations from $\mathcal{J}$ into $\mathcal{J}^*$ which are not inner), shows that a Jordan version of Goldstein's theorem for anti-symmetry orthogonal forms on a JB$^*$-algebra is a hopeless task (see Remark \ref{r there is no Jordan Goldstein theorem}).\smallskip

We introduce next the exact definitions. In a JB$^*$-algebra $\mathcal{J}$ we consider the following triple product $$\{a,b,c\} = (a\circ b^*) \circ c + (c\circ b^*) \circ a - (a\circ c) \circ b^*.$$ When equipped with this triple product and its norm, every JB$^*$-algebra becomes an element in the class of JB$^*$-triples introduced by W. Kaup in \cite{Ka}. The precise definition of JB$^*$-triples reads as follows: A \emph{JB$^*$-triple} is a complex Banach space $E$ equipped with a continuous
triple product $\{\cdot,\cdot,\cdot\}:E\times E\times E\rightarrow E$ which is linear and symmetric in the outer variables, conjugate
linear in the middle one and satisfies the following conditions:
\begin{enumerate}[(JB$^*$-1)]
\item (Jordan identity) for $a,b,x,y,z$ in $E$,
$$\{a,b,\{x,y,z\}\}=\{\{a,b,x\},y,z\}
-\{x,\{b,a,y\},z\}+\{x,y,\{a,b,z\}\};$$
\item $L(a,a):E\rightarrow E$ is an hermitian
(linear) operator with non-negative spectrum, where $L(a,b)(x)=\{a,b,x\}$ with $a,b,x\in
E$;
\item $\|\{x,x,x\}\|=\|x\|^3$ for all $x\in E$.
\end{enumerate}

We refer to the monographs \cite{Hanche}, \cite{Chu2012}, and \cite{CabRod2014} for the basic background on JB$^*$-algebras and JB$^*$-triples.\smallskip

A JBW$^*$-triple is a JB$^*$-triple which is also a dual Banach space
(with a unique isometric predual \cite{BarTi}). It is known that
the triple product of a JBW$^*$-triple is separately weak*-continuous
\cite{BarTi}. The second dual of a
JB$^*$-triple $E$ is a JBW$^*$-triple with a product extending that of $E$ (compare \cite{Di86b}).\smallskip

An element $e$ in a JB$^*$-triple $E$ is said to be a
\emph{tripotent} if $\J eee =e$. Each tripotent $e$ in $E$ gives
raise to the so-called \emph{Peirce decomposition} of $E$
associated to $e$, that is,
$$E= E_{2} (e) \oplus E_{1} (e) \oplus E_0 (e),$$ where for
$i=0,1,2,$ $E_i (e)$ is the $\frac{i}{2}$ eigenspace of $L(e,e)$.
The Peirce decomposition satisfies certain rules known as
\emph{Peirce arithmetic}: $$\J {E_{i}(e)}{E_{j} (e)}{E_{k}
(e)}\subseteq E_{i-j+k} (e),$$ if $i-j+k \in \{ 0,1,2\}$ and is
zero otherwise. In addition, $$\J {E_{2} (e)}{E_{0}(e)}{E} = \J
{E_{0} (e)}{E_{2}(e)}{E} =0.$$ The corresponding \emph{Peirce
projections} are denoted by $P_{i} (e) : E\to E_{i} (e)$,
$(i=0,1,2)$. The Peirce space $E_2 (e)$ is a JB$^*$-algebra with
product $x\bullet_{e} y := \J xey$ and involution $x^{\sharp_e} :=
\J exe$.
\smallskip

For each element $x$ in a JB$^*$-triple $E$, we shall denote $x^{[1]}
:= x$, $x^{[3]} := \J xxx$, and $x^{[2n+1]} := \J xx{x^{[2n-1]}},$
$(n\in \NN)$. The symbol $E_x$ will stand for the JB$^*$-subtriple
generated by the element $x$. It is known that $E_x$ is JB$^*$-triple
isomorphic (and hence isometric) to $C_0 (\Omega)$ for some
locally compact Hausdorff space $\Omega$ contained in $(0,\|x\|],$
such that $\Omega\cup \{0\}$ is compact, where $C_0 (\Omega)$
denotes the Banach space of all complex-valued continuous
functions vanishing at $0.$ It is also known that if $\Psi$
denotes the triple isomorphism from $E_x$ onto $C_{0}(\Omega),$
then $\Psi (x) (t) = t$ $(t\in \Omega)$ (cf. Corollary
4.8 in \cite{Ka0}, Corollary 1.15 in \cite{Ka} and \cite{FriRu85}).\smallskip

Therefore, for each $x\in E$, there exists a unique element $y\in
E_x$ satisfying that $\J yyy =x$. The element $y,$ denoted by
$x^{[\frac13 ]}$, is termed the \emph{cubic root} of $x$. We can
inductively define, $x^{[\frac{1}{3^n}]} =
\left(x^{[\frac{1}{3^{n-1}}]}\right)^{[\frac 13]}$, $n\in \NN$.
The sequence $(x^{[\frac{1}{3^n}]})$ converges in the
weak*-topology of $E^{**}$ to a tripotent denoted by $r(x)$ and
called the \emph{range tripotent} of $x$. The element $r(x)$ is
the smallest tripotent $e\in E^{**}$ satisfying that $x$ is
positive in the JBW$^*$-algebra $E^{**}_{2} (e)$ (compare \cite{EdRu}, Lemma
3.3).\smallskip

Elements $a,b$ in a JB$^*$-algebra $\mathcal{J}$, or more generally, in a JB$^*$-triple $E$, are said to be \emph{orthogonal} (denoted by $a\perp b$) when $L(a,b)=0$, that is, the triple product $\{a,b,c\} $ vanishes for every $c\in \mathcal{J}$ or in $E$ (\cite{BurFerGarMarPe,BurFerGarPe}). An application of \cite[Lemma 1]{BurFerGarMarPe} assures that $a\perp b$ if and only if one of the following statements holds:
\begin{equation}
\label{ref orthogo}\begin{array}{ccc}
  \J aab =0; & a \perp r(b); & r(a) \perp r(b); \\
  & & \\
  E^{**}_2(r(a)) \perp E^{**}_2(r(b));\ \ \ & r(a) \in E^{**}_0 (r(b));\ \ \  & a \in E^{**}_0 (r(b)); \\
  & & \\
  b \in E^{**}_0 (r(a)); & E_a \perp E_b & \J bba=0.
\end{array}
\end{equation}
The above equivalences, in particular imply that the relation of being orthogonal is a ``local concept'', more precisely, $a\perp b$ in $\mathcal{J}$ (respectively in $E$) if and only if $a\perp b$ in a JB$^*$-subalgebra (respectively, JB$^*$-subtriple) $\mathcal{K}$ containing $a$ and $b$.\smallskip

Suppose $a\perp b$ in $\mathcal{J}$, applying the above arguments we can always assume that $\mathcal{J}$ is unital. In this case, $a\circ b^* = \{a,b,1\} =0$ and $(a\circ a^*) \circ b - (a\circ b) \circ a^*=(a\circ a^*) \circ b + (b\circ a^*) \circ a - (a\circ b) \circ a^*=0,$ therefore $a\circ b^* =0$ and $(a\circ a^*) \circ b = (a\circ b) \circ a^*$. Actually the last two identities also imply that $a\perp b$. It follows that \begin{equation}\label{eq orthogonality in JB*-algeb} a\perp b \Leftrightarrow a\circ b^* =0 \hbox{ and } (a\circ a^*) \circ b = (a\circ b) \circ a^*.
\end{equation} So, if $a\perp b$ and $c$ is another element in $\mathcal{J}$, we deduce, via Jordan identity, that
$$\{U_a (c), U_a (c), b\} = \{ \{a,c^*,a\}, \{a,c^*,a\}, b\} =  - \{c^*,a, \{ \{a,c^*,a\},a,b\}\} $$
$$+ \{  \{c^*,a,\{a,c^*,a\}\},a,b\} + \{ \{a,c^*,a\},a,\{c^*,a,b\}\} =0,$$ which shows that $U_a (c) \perp b$.\smallskip

We shall also make use of the following fact
\begin{equation}\label{eq implication of orthogonality} a\perp b \hbox{ in } \mathcal{J} \Rightarrow (c\circ b^*) \circ a = (a\circ c) \circ b^*,
\end{equation} for every $c\in \mathcal{J}$, this means, in the terminology of \cite{Top}, that $a$ and $b^*$ operator commute in $\mathcal{J}$.
For the proof, we observe that, since $a\perp b$, $a \circ b^*=0$, and the involution preserves triple products, we have $0=\{a,b,c\} = (a\circ b^*) \circ c + (c\circ b^*) \circ a - (a\circ c) \circ b^*,$ we obtain the desired equality. A direct application of \eqref{eq implication of orthogonality} and \eqref{eq orthogonality in JB*-algeb} shows that \begin{equation}\label{eq implication of orthogonality 2} a\perp b \hbox{ in } \mathcal{J} \Rightarrow  (a^2) \circ b^* = (a\circ b^*) \circ a =0.
\end{equation}

When a C$^*$-algebra $A$ is regarded with its structure of JB$^*$-algebra, elements $a,b$ in $A$ are orthogonal in the associative sense if and only if they are orthogonal in the Jordan sense.\smallskip

\begin{definition}\label{def orhtogoanl forms Jordan} A form $V : \mathcal{J}\times \mathcal{J} \to \mathbb{C}$ is said to be {orthogonal}
when $V (a,b^*) = 0$ for every $a,b\in \mathcal{J}$ with $a\perp b$. If $V (a,b) = 0$ only for elements $a,b\in \mathcal{J}_{sa}$ with $a\perp b$, we shall say that $V$ is orthogonal on $\mathcal{J}_{sa}$.
\end{definition}

\subsection{Purely Jordan generalized derivations and symmetric orthogonal forms}

We begin this subsection dealing with symmetric orthogonal forms on a C$^*$-algebra, a setting in which these forms have been already studied. Let $V : A\times A \to X$ be a symmetric, orthogonal form on a C$^*$-algebra. By Goldstein's theorem (cf. Theorem \cite{Gold}), there exists a unique functional $\phi_{_V}\in A^*$ satisfying that $V (a,b) = \phi_{_V} (a \circ b)$ for all $a,b\in A$. The statement also follows from the studies of orthogonally additive $n$-homogeneous polynomials on C$^*$-algebras developed in \cite{PaPeVi} and \cite[\S 3]{BurFerGarPe}.\smallskip

Given an element $a$ in the self adjoint part $\mathcal{J}_{sa}$ of a JBW$^*$-algebra $\mathcal{J}$, there exists a smallest projection $r(a)$ in $\mathcal{J}$ with the property that $r(a)\circ a = a$ We call $r(a)$ the range projection of $a$, and it is further known that $r(a)$ belongs JBW$^*$-subalgebra of $\mathcal{J}$ generated by $a$. It is easy to check that $r(a)$ coincides with the range tripotent of $a$ in $\mathcal{J}$ when the latter is seen as a JBW$^*$-triple, so, our notation is consistent with the previous definitions. \smallskip

We explore now the symmetric orthogonal forms on a JB$^*$-algebra.

\begin{proposition}\label{p Jordan symm orthog forms}
Let $V : \mathcal{J}\times \mathcal{J} \to \mathbb{C}$ be a symmetric form on a JB$^*$-algebra which is  orthogonal on $\mathcal{J}_{sa}$. Then there exists a unique $\phi\in \mathcal{J}^{*}$ satisfying $$V(a,b) = \phi (a\circ b),$$ for every $a,b\in \mathcal{J}.$
\end{proposition}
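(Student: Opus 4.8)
The plan is to pass to the bidual, reduce the statement to a ``diagonal'' identity on self-adjoint elements, and verify that identity by combining the classical commutative case with the spectral/weak$^*$ machinery recalled above. Since every bounded bilinear form $\mathcal{J}\times\mathcal{J}\to\mathbb{C}$ is Arens regular, $V$ extends uniquely to a separately weak$^*$-continuous, symmetric form $\widetilde{V}:\mathcal{J}^{**}\times\mathcal{J}^{**}\to\mathbb{C}$, where $\mathcal{J}^{**}$ is a unital JBW$^*$-algebra with unit $1$. I would set $\phi:=\widetilde{V}(\cdot,1)$; separate weak$^*$-continuity makes $\phi$ weak$^*$-continuous, so $\phi\in(\mathcal{J}^{**})_{*}=\mathcal{J}^{*}$. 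Because $V$ and $(a,b)\mapsto\phi(a\circ b)$ are both symmetric and complex bilinear, a polarization reduces everything to the diagonal identity $V(y,y)=\phi(y^{2})$ for $y\in\mathcal{J}_{sa}$: replacing $y$ by $a+b$ and using symmetry gives $V(a,b)=\phi(a\circ b)$ for self-adjoint $a,b$, and splitting a general pair into self-adjoint parts finishes by bilinearity. Uniqueness is then automatic, since an approximate unit $(e_{\lambda})$ of $\mathcal{J}$ satisfies $e_{\lambda}\circ a\to a$, so $\{a\circ b\}$ has dense span and any representing functional is determined on $\mathcal{J}$.

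For the diagonal identity I would fix $y\in\mathcal{J}_{sa}$ and pass to the associative, commutative JB$^*$-subalgebra $B$ generated by $y$, so that $B\cong C_{0}(\Omega)$, with range projection $e=r(y)=r(y^{2})$ acting as the unit of $B^{**}\subseteq\mathcal{J}^{**}$. The restriction $V|_{B}$ is a symmetric form orthogonal on $B_{sa}$, hence $f\mapsto V(f,f)$ is a bounded, orthogonally additive, $2$-homogeneous form on $B_{sa}=C_{0}^{\mathbb{R}}(\Omega)$; the representation of such forms on a commutative C$^{*}$-algebra recalled at the start of this subsection (which, via orthogonally additive polynomials, only uses orthogonality on the self-adjoint part) furnishes $\psi\in B^{*}$ with $V(f,g)=\psi(f\circ g)$ for $f,g\in B$, and in particular $V(y,y)=\psi(y^{2})$. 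Choosing an approximate unit $(u_{\lambda})$ of $B$ with $u_{\lambda}\to e$ weak$^*$ and using weak$^*$-continuity of $\widetilde{V}$ in its second variable, I would identify $\psi(f)=\lim_{\lambda}V(f,u_{\lambda})=\widetilde{V}(f,e)$ for $f\in B$, whence $V(y,y)=\widetilde{V}(y^{2},e)$.

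It then remains to replace the local unit $e=r(y^{2})$ by the global unit $1$, that is, to prove $\widetilde{V}(y^{2},1-e)=0$. Here $1-e=1-r(y^{2})$ lies in the Peirce-zero space $\mathcal{J}^{**}_{0}(r(y^{2}))$ and is therefore orthogonal to $y^{2}$. This is the one genuinely non-formal step: I would invoke the weak$^*$-density of $\{x\in\mathcal{J}_{sa}:x\perp y^{2}\}$ in $\big(\mathcal{J}^{**}_{0}(r(y^{2}))\big)_{sa}$ (a known property of the orthogonal complement, equivalently the Peirce-zero space, in JB$^*$-triples), take a net $x_{\nu}\in\mathcal{J}_{sa}$ with $x_{\nu}\perp y^{2}$ and $x_{\nu}\to 1-e$ weak$^*$, and combine $V(y^{2},x_{\nu})=0$ (orthogonality of $V$ on $\mathcal{J}_{sa}$) with the weak$^*$-continuity of $\widetilde{V}(y^{2},\cdot)$ to conclude $\widetilde{V}(y^{2},1-e)=0$. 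Thus $\phi(y^{2})=\widetilde{V}(y^{2},1)=\widetilde{V}(y^{2},e)=V(y,y)$, completing the diagonal identity, and restricting the resulting formula $\widetilde V(a,b)=\phi(a\circ b)$ to $\mathcal{J}$ yields the claim.

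The main obstacle is precisely this last passage from $r(y^{2})$ to $1$: transferring the orthogonality hypothesis, which is available only on $\mathcal{J}_{sa}$, to the self-adjoint part of the bidual. All orthogonality relations internal to the single commutative subalgebra $B$ are harmless, because they are realized by honestly orthogonal elements of $\mathcal{J}$ (continuous functions of $y$ with disjoint supports), and the commutative representation theorem absorbs them. The co-support projection $1-r(y^{2})$, however, is a bidual element with possibly no nonzero minorant in $\mathcal{J}$, so its vanishing against $y^{2}$ cannot be obtained by naive approximation and must rest on the weak$^*$-density statement for Peirce-zero spaces; establishing (or correctly citing) that density is the crux of the argument.
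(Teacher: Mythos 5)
Your architecture coincides with the paper's proof of Proposition \ref{p Jordan symm orthog forms} in every step but the last one: the Arens-regular extension, the choice $\phi=\widetilde{V}(\cdot,1)$, the polarization reduction to the diagonal identity on $\mathcal{J}_{sa}$, and the use of Goldstein's theorem on the commutative subalgebra generated by $y$ to get $V(y,y)=\widetilde{V}(y^2,r(y^2))$ are all exactly as in the paper. However, the step you yourself single out as the crux rests on a false statement: the set $\{x\in\mathcal{J}_{sa}: x\perp y^{2}\}$ is in general \emph{not} weak$^*$-dense in $\big(\mathcal{J}^{**}_{0}(r(y^{2}))\big)_{sa}$, and this is not a known property of Peirce-zero spaces. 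Take $\mathcal{J}=C[0,1]$ and $y(t)=\sqrt{t}$, so $y^{2}(t)=t$. If $x\in\mathcal{J}$ satisfies $x\perp y^{2}$, then $t\, x(t)=0$ for all $t$, hence $x$ vanishes on $(0,1]$ and, by continuity, $x\equiv 0$; the set whose density you need is $\{0\}$. On the other hand, $1-r(y^{2})\neq 0$ in $\mathcal{J}^{**}$: the range projection $r(y^{2})$ is the weak$^*$ limit of the functions $t^{1/n}$, so pairing with the Dirac functional $\delta_{0}\in\mathcal{J}^{*}$ gives $\langle 1-r(y^{2}),\delta_{0}\rangle=1$. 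Thus the Peirce-zero space contains a nonzero projection that is not in the weak$^*$ closure of $\{0\}$, your net $(x_{\nu})$ cannot exist, and the passage from the local unit $e$ to the global unit $1$ collapses. The phenomenon you worry about in your final paragraph (``a bidual element with possibly no nonzero minorant in $\mathcal{J}$'') is precisely what kills the density claim, not merely what makes it delicate.

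The paper closes exactly this hole by a two-parameter truncation instead of a one-shot density argument, and it never seeks elements of $\mathcal{J}$ orthogonal to $a=y^{2}$ itself. Working in $\mathcal{J}_a\cong C_0(L)$, it replaces $a$ in norm by truncations $g_{\varepsilon}$ whose cozero set lies in $[\varepsilon,1]$, leaving a spectral gap below $\varepsilon$; inside the subalgebra there are honest functions $h_{n}$ with $1-h_{n}\perp p_{\varepsilon}, g_{\varepsilon}$, and then — this is the device your proposal lacks — the implication $a\perp b \Rightarrow U_{a}(c)\perp b$ for \emph{every} $c\in\mathcal{J}$ (proved after \eqref{eq orthogonality in JB*-algeb} via the Jordan identity) converts this meager supply into the weak$^*$-large family $U_{1-h_{n}}(c)\in\mathcal{J}_{sa}$ of elements orthogonal to $g_{\varepsilon}$. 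Applying the hypothesis to $V(U_{1-h_{n}}(c_{\gamma}),g_{\varepsilon})$, letting $c_{\gamma}\to 1$ weak$^*$ and then $n\to\infty$ (where $(1-h_{n})^{2}\to 1-p_{\varepsilon}$ weak$^*$, legitimately, by bounded pointwise convergence in the commutative bidual) yields \eqref{eq pvarepsilon}, i.e.\ $V^{***}(1-p_{\varepsilon},g_{\varepsilon})=0$; the double limit $\eta,\varepsilon\to 0$ — weak$^*$ in the projection variable, norm in the other — then gives $V^{***}(1-r(a),a)=0$, which is your missing identity $\widetilde{V}(y^{2},1-e)=0$ (cf.\ \eqref{eq range replaced with unit}). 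To repair your write-up, replace the density claim by this truncation-plus-$U$-operator argument; everything before and after it, including the uniqueness of $\phi$ via an approximate unit, is correct and matches the paper.
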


\begin{proof} We have already commented that the (unique) third Arens transpose $V^{***} :\mathcal{J}^{**}\times \mathcal{J}^{**} \to \mathbb{C}$  is separately weak$^*$-continuous (cf. Subsection \ref{subsec: Jordan modules}). Let $a$ be a self-adjoint element in $\mathcal{J}$. It is known that the JB$^*$-subalgebra $\mathcal{J}_a$ generated by $a$ is JB$^*$-isometrically isomorphic to a commutative C$^*$-algebra (cf. \cite[\S 3]{Hanche}). Since the restricted mapping $V|_{_{\mathcal{J}_a\times \mathcal{J}_a}} : \mathcal{J}_a\times \mathcal{J}_a \to \mathbb{C}$ is a symmetric orthogonal form, there exists a functional $\phi_a\in (\mathcal{J}_a)^*$ satisfying that $$V(c,d) = \phi_a (c\circ d),$$ for every $c,d \in \mathcal{J}_a$ (cf. Theorem \ref{thm Goldstein}). It follows from the weak$^*$-density of $\mathcal{J}_{a}$ in $(\mathcal{J}_a)^{**}$ together with the separate weak$^*$-continuity of $V^{***}$, and the weak$^*$-continuity of $\phi_a$, that $$V^{***} (c,d) = \phi_a (c\circ d),$$ for every $c,d \in (\mathcal{J}_a)^{**}$. Taking $c= a$ and $d= r(a)$ the range projection of $a$ we get \begin{equation}\label{eq range projection} V (a,a) = \phi_a (a\circ a) =  \phi_a (a^2\circ r(a)) = V^{***} (a^2, r(a)) = V^{***} (r(a),a^2),
\end{equation} for every $a\in \mathcal{J}_{sa}$.\smallskip

We claim that \begin{equation}\label{eq range replaced with unit} V^{***} (a, r(a)) = V^{***} (r(a),a) = V^{***} (a,1) = V^{***} (1,a),
\end{equation} for every positive $a\in \mathcal{J}_{sa}$. We may assume that $\|a\|=1$. We actually know that there is a set $L\subset [0,1]$ with $L\cup \{0\}$ compact such that $\mathcal{J}_a$ is isomorphic to the C$^*$-algebra $C_0 (L)$ of all continuous complex-valued functions on $L$ vanishing at $0$, and under this isometric identification the element $a$ is identified with the function $t\mapsto t$. Indeed, given $\varepsilon >0,$ let $p_{\varepsilon}= \chi_{_{[\varepsilon,1]}}$ denote the projection in $(\mathcal{J}_a)^{**},$ which coincides with the characteristic function of the set $[\varepsilon,1]\cap L.$ Clearly, $p_{\varepsilon}\leq r(a)$ in $\mathcal{J}^{**}.$ Suppose we have a function $g\in \mathcal{J}_a \equiv C_0 (L)$ satisfying $p_{\varepsilon}\circ g = g \geq 0$, that is, the cozero set of $g$ is inside the interval $[\varepsilon,1]$.\smallskip

Take a sequence $(h_n)\subset C_0 (L)$ defined by
$$h_{n} (t):=\left\{%
\begin{array}{ll}
    1, & \hbox{if $t\in L\cap [\varepsilon-\frac{1}{2n},1]$;} \\
    \hbox{affine}, & \hbox{if $t\in L\cap [\varepsilon-\frac{1}{n},\varepsilon-\frac{1}{2n} ]$;} \\
    0, & \hbox{if $t\in L\cap [0,\varepsilon-\frac{1}{n} ]$} \\
\end{array}%
\right.$$ for $n$ large enough ($n\geq m_0$). The sequence $(h_n)$ converges to $p_{\varepsilon}$ in the weak$^*$-topology of $(\mathcal{J}_a)^{**}$ and $1-h_n \perp p_{\varepsilon}, g$. So, $\mathcal{J}\ni U_{1-h_n} (c) \perp g$ for every $c\in \mathcal{J}$ and $n\geq m_0$. Since $1\in \mathcal{J}^{**}$, we can find, via Goldstine's theorem, a net $(c_{\gamma})\subset \mathcal{J}$ converging to $1$ in the weak$^*$ topology of $\mathcal{J}^{**}$. By hypothesis, $0=V(U_{1-h_n}(c_{\gamma}), g)$, for every $\lambda$, $n\geq m_0$. Taking limits in $\gamma$ and in $n$, it follows from the separate weak$^*$ continuity of $V^{***}$, that \begin{equation}\label{eq pvarepsilon} V^{***} (1-p_{\varepsilon},g)=0
\end{equation} for every $p_{\varepsilon}$ and $g$ as above. If we take $$g_{\varepsilon} (t):=\left\{%
\begin{array}{ll}
    t, & \hbox{if $t\in L\cap [2 \varepsilon,1]$;} \\
    \hbox{affine}, & \hbox{if $t\in L\cap [\varepsilon,2 \varepsilon]$;} \\
    0, & \hbox{if $t\in L\cap [0,\varepsilon]$}, \\
\end{array}%
\right.$$ then $0\leq g_{\varepsilon}\leq p_{\eta}$, for every $\eta \leq \varepsilon$, $\displaystyle \lim_{\varepsilon\to 0} \| g_{\varepsilon}-a\| =0$ and weak$^*$-$\displaystyle \lim_{\eta\to 0} p_{\eta} = r(a)$. Combining these facts with \eqref{eq pvarepsilon} and the separate weak$^*$-continuity of $V^{***}$, we get $V^{***} (1-r(a), a) =0$, which proves \eqref{eq range replaced with unit}.\smallskip

The identities in \eqref{eq range projection} and \eqref{eq range replaced with unit} show that $V(a,a) = V^{***} (1, a^2),$ for every $a\in \mathcal{J}_{sa}$. Let us define $\phi = V^{***} (1, .)\in A^{*}$. A polarization formula, and $V$ being symmetric imply that $V(a,b) = V^{***} (1, a\circ b) = \phi (a\circ b)$, for every $a,b\in \mathcal{J}_{sa}$, and by bilinearity $V(a,b) = \phi (a\circ b)$, for every $a,b\in \mathcal{J}$.
\end{proof}

The previous proposition is a generalization of Goldstein's theorem for symmetric orthogonal forms. It can be also regarded as a characterization of orthogonally additive 2-homogeneous polynomials on a JB$^*$-algebra $\mathcal{J}$. More concretely, according to the notation in \cite{PaPeVi}, a 2-homogeneous polynomial $P: \mathcal{J} \to \mathbb{C}$ is orthogonally additive on $\mathcal{J}_{sa}$ (i.e., $P(a+b)= P(a) + P(b)$ for every $a\perp b$ in $\mathcal{J}_{sa}$) if, and only if, there exists a unique $\phi\in \mathcal{J}^*$ satisfying $P(a) = \phi (a^2)$, for every $a\in \mathcal{J}$. This characterization constitutes an extension of \cite[Theorem 2.8]{PaPeVi} and \cite[Theorem 3.1]{BurFerGarPe} to the setting of JB$^*$-algebras.\smallskip

\begin{remark}\label{remark orthogonal symmetric is orthogonal Jordan} Let $V : \mathcal{J}\times \mathcal{J} \to \mathbb{C}$ be a symmetric form on a JB$^*$-algebra. The above Proposition \ref{p Jordan symm orthog forms} implies that $V$ is orthogonal if and only if it is orthogonal on $\mathcal{J}_{sa}$.
\end{remark}

Let $V : \mathcal{J}\times \mathcal{J} \to \mathbb{C}$ be a symmetric orthogonal form on a JB$^*$-algebra, and let $\phi_{V}$ be the unique functional in $\mathcal{J}^{*}$ given by Proposition \ref{p Jordan symm orthog forms}. If we define $G_{_V}: \mathcal{J} \to \mathcal{J}^*$, the operator given by $G_{_V} (a) = V(a,.)$, we can conclude that $G_{_V} (a) = \phi_{_V}\circ a = G_{\phi_{_V}} (a)$, and hence $G{_{_V}}: \mathcal{J}\to \mathcal{J}^*$ is a generalized Jordan  derivation and $V(a,b) = G_{_V} (a) (b)$ ($a,b\in \mathcal{J}$). Moreover, for every $a,b\in \mathcal{J}$,  $G{_{_V}} (a) (b) = V(a,b) = V(b,a) = G{_{_V}} (b)(a)$. This fact motivates the following definition:

\begin{definition}\label{def purely Jordan Gen der}{\rm Let $\mathcal{J}$ be a JB$^*$-algebra. A
\emph{purely Jordan generalized derivation} from $\mathcal{J}$ into $\mathcal{J}^*$ is a generalized Jordan derivation $G: \mathcal{J} \to \mathcal{J}^*$
satisfying $G(a) (b) = G(b) (a)$, for every $a,b\in \mathcal{J}$.}
\end{definition}

We have already seen that every symmetric orthogonal form $V$ on a JB$^*$-algebra $\mathcal{J}$ determines a pure Jordan generalized derivation $G_{V} :  \mathcal{J} \to \mathcal{J}^*$. To explore the reciprocal implication we shall prove that every generalized derivation from $\mathcal{J}$ into $\mathcal{J}^*$ defines an orthogonal on $\mathcal{J}_{sa}$.

\begin{proposition}\label{prop generalized Jordan derivations define orthogonal forms} Let $G: \mathcal{J} \to \mathcal{J}^*$ be a generalized Jordan derivation, where $\mathcal{J}$ is a JB$^*$-algebra. Then the form $V_{G} : \mathcal{J}\times \mathcal{J} \to \mathbb{C}$, $V_{G} (a,b) = G(a) (b)$ is orthogonal on $\mathcal{J}_{sa}$.
\end{proposition}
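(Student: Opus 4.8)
The plan is to show that $V_G(a,b) = G(a)(b) = 0$ whenever $a,b \in \mathcal{J}_{sa}$ satisfy $a \perp b$. Since we are working with self-adjoint elements, the involution acts trivially, and the orthogonality characterisation \eqref{eq orthogonality in JB*-algeb} reduces to $a \circ b = 0$ together with $(a \circ a) \circ b = (a \circ b) \circ a$, i.e. $a^2 \circ b = 0$; more usefully, I would invoke the triple-product identities \eqref{eq implication of orthogonality} and \eqref{eq implication of orthogonality 2}, which for self-adjoint orthogonal $a,b$ give $a \circ b = 0$ and $a^2 \circ b = 0$. The strategy is to exploit the defining relation of a generalized Jordan derivation, namely $G(a \circ b) = G(a) \circ b + a \circ G(b) - U_{a,b}(\xi)$ for the element $\xi \in \mathcal{J}^{***}$ supplied by Remark \ref{r element xi in a gen Jordan der}, and to pair it against suitable elements so that the orthogonality forces everything to cancel.

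First I would write $V_G(a,b) = G(a)(b) = (G(a))(b)$ and try to re-express $G(a)(b)$ using the module structure \eqref{eq Jordan module structure in the dual} on $\mathcal{J}^*$, so that evaluating a functional at a Jordan product becomes a product in the module. The key computation is to expand $G(a^2 \circ b)$ in two ways. Since $a \perp b$ gives $a^2 \circ b = 0$, the left-hand side is $G(0) = 0$; applying the derivation identity to the product $a^2 \circ b$ (using that $G$ is a \emph{generalized Jordan} derivation, so I may substitute $a^2$ and $b$, or iterate on $a \circ a$ and $b$) yields a sum of terms involving $G(a) \circ b$, $a \circ G(b)$, $G(a^2)\circ b$, and correction terms of the form $U_{\,\cdot\,,\,\cdot}(\xi)$. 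The orthogonality relations $a \circ b = 0$ and $a^2 \circ b = 0$, together with \eqref{eq implication of orthogonality} (operator commutation of $a$ and $b$), should kill the surviving $U$-terms and force the paired evaluation to vanish.

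The cleanest route is probably to evaluate the generalized-derivation identity with the roles arranged so that the target functional is tested against $a$ or $b$ and then use that $G(a)(b) = (a \circ G(a))(\cdots)$-type rewritings collapse under orthogonality. Concretely, I would compute $G(a)(b)$ by noting $G(a)\circ b \in \mathcal{J}^*$ acts via $(G(a)\circ b)(c) = G(a)(b \circ c)$, and choose $c$ to be an approximate unit or the range projection $r(a)$ (available after passing to $G^{**}$ on $\mathcal{J}^{**}$ by Proposition \ref{p bidual extensions}), exploiting that $r(a) \perp b$ whenever $a \perp b$, as recorded in \eqref{ref orthogo}. This lets me replace $a$ by $r(a) \circ a$ and push the orthogonality through.

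The main obstacle will be the correction terms $U_{a,b}(\xi)$: unlike the pure Jordan-derivation case, these do not vanish automatically, and I must verify that $U_{a,b}(\xi)$ evaluated in the relevant position is zero precisely because $a \perp b$ forces $U_{a,b}$ to annihilate the appropriate arguments. Here I expect to lean on the triple-product computation displayed just before Definition \ref{def orhtogoanl forms Jordan}, showing $U_a(c) \perp b$, and on \eqref{eq implication of orthogonality 2}; the delicate point is keeping track of which products genuinely vanish in the non-associative setting, where $a^2 \circ b = 0$ does \emph{not} imply $a \circ (a \circ b)$-type cancellations for free. Once the $U$-term is neutralised, bilinearity and the self-adjoint reduction finish the argument, and the symmetric/anti-symmetric refinement is deferred to the later theorems.
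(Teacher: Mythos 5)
Your proposal is correct in substance, and its decisive step takes a genuinely different route from the paper's. Both arguments share the same skeleton: automatic continuity (Proposition \ref{p automatic cont generalized Jordan derivations}), the passage to the weak$^*$-continuous extension $G^{**}:\mathcal{J}^{**}\to \mathcal{J}^{*}$ with $\xi=G^{**}(1)$ (Proposition \ref{p bidual extensions}), and the observation that orthogonality annihilates the $U$-correction. But where you factorize the element itself, the paper only handles projections directly: it applies the identity to $p=p\circ p$ for a projection $p\perp b$, obtaining $G^{**}(p)(b^{*})=0$ in one line, and then approximates an arbitrary self-adjoint $a$ in norm by finite linear combinations of mutually orthogonal projections in the JBW$^*$-subalgebra $\mathcal{J}^{**}_{<a>}$ (all orthogonal to $b$ by \eqref{ref orthogo}), concluding by norm continuity of $G^{**}$ via \cite[Proposition 4.2.3]{Hanche}. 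Your one-shot factorization does close and avoids the approximation step entirely: if $p\in \mathcal{J}^{**}_{<a>}$ is a projection with $p\circ a=a$ and $p\perp b$, then
$$G^{**}(a)(b)=G^{**}(p\circ a)(b)=G^{**}(p)(a\circ b)+G^{**}(a)(p\circ b)-\xi\bigl(U_{p,a}(b)\bigr),$$
using \eqref{eq Jordan module structure in the dual}, and all three terms vanish because $a\circ b=0=p\circ b$ by \eqref{eq orthogonality in JB*-algeb} and $U_{p,a}(b)=(p\circ b)\circ a+(a\circ b)\circ p-(p\circ a)\circ b=-a\circ b=0$. What each approach buys: yours is shorter and needs no spectral approximation; the paper's projection-plus-approximation scheme is the template it then upgrades (replacing projections by tripotents $e=\{e,e,e\}$ and \cite[Lemma 3.11]{Ho87}) to prove the stronger Proposition \ref{prop generalized Jordan derivations define orthogonal forms on the whole JB-algebra} on all of $\mathcal{J}$, where your range-projection shortcut is less immediately available.

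Three small repairs to your write-up. First, the opening computation expanding $G(a^2\circ b)=0$ leads nowhere: it yields a relation among $G(a^2)$ and $G(b)$ that does not isolate $G(a)(b)$, and you rightly abandon it for the factorization route. Second, the parenthetical ``approximate unit'' choice of $c$ would fail, since an approximate unit is not orthogonal to $b$ and every cancellation above hinges on $c\perp b$. Third, for a merely self-adjoint (not positive) $a$, the range tripotent appearing in \eqref{ref orthogo} is a symmetry rather than a projection, so you should either take $p$ to be the support projection of $a$, which still satisfies $p\circ a=a$ and lies in $\mathcal{J}^{**}_{<a>}\perp b$, or split $a=a_{+}-a_{-}$ and argue by linearity on the positive parts.
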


\begin{proof} Let $G: \mathcal{J} \to \mathcal{J}^*$ be a generalized Jordan derivation. By Proposition \ref{p automatic cont generalized Jordan derivations}, $G$ is continuous, and by Proposition \ref{p bidual extensions}, $G^{**}: \mathcal{J}^{**} \to \mathcal{J}^*$ is a generalized Jordan derivation too. Let $\xi$ denote $G^{**} (1)$.\smallskip

Let $p$ be a projection in $\mathcal{J}^{**}$ and let $b$ be any element in $\mathcal{J}^{**}$ such that $p\perp b$. Since $$G^{**}(p) = G^{**}(p\circ p ) = 2 p \circ G^{**}(p) + U_{p} (\xi),$$ we deduce that \begin{equation}\label{eq projection and orthogonal} G^{**}(p) (b^*) = 2 G^{**}(p) (p\circ b^*) + \xi (U_{p} (b^*)) =0.
\end{equation}

Let $a$ be a symmetric element in $\mathcal{J}^{**}$, and let $b$ be any element in $\mathcal{J}^{**}$ satisfying $a\perp b$. By \eqref{ref orthogo}, the JBW$^*$-algebra $\mathcal{J}^{**}_{<a>}$ generated by $a$ is orthogonal to $b$, that is, $c\perp b$ for every $c\in \mathcal{J}^{**}_{<a>}$. It is well known that $a$ can be approximated in norm by finite linear combinations of mutually orthogonal projections in $\mathcal{J}^{**}_{<a>}$ (cf. \cite[Proposition 4.2.3]{Hanche}). It follows from \eqref{eq projection and orthogonal}, the continuity of $G^{**}$, and the previous comments that $$ V_{_{G^{**}}} (a,b^*) = G^{**} (a)(b^*)= 0,$$ for every $a\in \mathcal{J}^{**}_{sa}$ and every $b\in \mathcal{J}^{**}$ with $a\perp b$.
 \end{proof}

Our next result follows now as a consequence of Proposition \ref{p Jordan symm orthog forms}, Remark \ref{remark orthogonal symmetric is orthogonal Jordan}, and Proposition \ref{prop generalized Jordan derivations define orthogonal forms}.

\begin{theorem}\label{t one-to-one correspondence between generalized purely Jordan derivations and symmetric orthognal forms}  Let $\mathcal{J}$ be a JB$^*$-algebra. Let $\mathcal{OF}_{s}(\mathcal{J})$ denote the Banach space of all symmetric orthogonal forms on $\mathcal{J}$, and let $\mathcal{PJGD}er(\mathcal{J},\mathcal{J}^*)$ the Banach space of all purely Jordan generalized derivations from $\mathcal{J}$ into $\mathcal{J}^*$. For each $V\in \mathcal{OF}_{s}(\mathcal{J})$ define $G_{_V} : \mathcal{J} \to \mathcal{J}^*$ in $\mathcal{PJGD}er(\mathcal{J},\mathcal{J}^*)$ given by $G_{_V} (a) (b) = V(a,b)$, and for each $G\in \mathcal{PJGD}er(\mathcal{J},\mathcal{J}^*)$ we set $V_{_G} : \mathcal{J}\times \mathcal{J} \to C$, $V_{_G} (a,b) := G(a) (b)$ $(a,b\in \mathcal{J})$.
Then the mappings
$$\mathcal{OF}_{s}(\mathcal{J}) \to \mathcal{PJGD}\hbox{er}(\mathcal{J},\mathcal{J}^*), \ \ \mathcal{PJGD}\hbox{er}(\mathcal{J},\mathcal{J}^*) \to \mathcal{OF}_{s}(\mathcal{J}),$$ $$V\mapsto G_{_V}, \ \ \ \  \ \ \ \  \ \ \ \  \ \ \ \  \ \ \ \  \ \ \ \  \ \ \ \ \ \ \ \  \ \ \ \ G\mapsto V_{_G},$$ define two isometric linear bijections and are inverses of each other.$\hfill\Box$
\end{theorem}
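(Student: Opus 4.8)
The plan is to assemble the three preceding results into the four required assertions: that each assignment is well defined, that both are linear, that they are mutually inverse, and that they are isometric. The substantive analysis has already been carried out, so the argument is essentially bookkeeping. First I would verify that $V\mapsto G_{_V}$ takes values in $\mathcal{PJGD}\hbox{er}(\mathcal{J},\mathcal{J}^*)$. Given $V\in \mathcal{OF}_{s}(\mathcal{J})$, Proposition \ref{p Jordan symm orthog forms} yields a unique $\phi_{_V}\in \mathcal{J}^*$ with $V(a,b)=\phi_{_V}(a\circ b)$, whence $G_{_V}(a)=\phi_{_V}\circ a=L(\phi_{_V})(a)$. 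As recalled in Section \ref{sec:derivations}, $L(x)$ is a generalized Jordan derivation for every $x$ in the module, so $G_{_V}$ is a generalized Jordan derivation; and the symmetry of $V$ gives $G_{_V}(a)(b)=V(a,b)=V(b,a)=G_{_V}(b)(a)$, so $G_{_V}$ is purely Jordan in the sense of Definition \ref{def purely Jordan Gen der}. This is exactly the computation carried out in the paragraph preceding that definition.

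Conversely, I would check that $G\mapsto V_{_G}$ takes values in $\mathcal{OF}_{s}(\mathcal{J})$. The defining identity $G(a)(b)=G(b)(a)$ of a purely Jordan generalized derivation says precisely that $V_{_G}$ is symmetric. Proposition \ref{prop generalized Jordan derivations define orthogonal forms} shows that $V_{_G}$ is orthogonal on $\mathcal{J}_{sa}$, and Remark \ref{remark orthogonal symmetric is orthogonal Jordan}, which applies because $V_{_G}$ is symmetric, upgrades this to orthogonality on the whole of $\mathcal{J}$. Hence $V_{_G}\in \mathcal{OF}_{s}(\mathcal{J})$.

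It remains to record the formal properties. Linearity of both maps is immediate from the definitions of $G_{_V}$ and $V_{_G}$. That they are mutual inverses follows from the two identities $V_{_{G_{_V}}}(a,b)=G_{_V}(a)(b)=V(a,b)$ and $G_{_{V_{_G}}}(a)(b)=V_{_G}(a,b)=G(a)(b)$, valid for all $a,b\in \mathcal{J}$. For the isometric statement I would simply note that the norm of a form and the norm of the associated operator are computed by the same supremum, namely $\|V\|=\sup_{\|a\|,\|b\|\leq 1}|V(a,b)|=\sup_{\|a\|,\|b\|\leq 1}|G_{_V}(a)(b)|=\|G_{_V}\|$, and symmetrically for $G\mapsto V_{_G}$. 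No genuine obstacle arises at this stage: the only delicate points are the representation $V(a,b)=\phi_{_V}(a\circ b)$ and the passage from orthogonality on $\mathcal{J}_{sa}$ to orthogonality on $\mathcal{J}$, and these are already settled by Proposition \ref{p Jordan symm orthog forms} and Remark \ref{remark orthogonal symmetric is orthogonal Jordan}, respectively.
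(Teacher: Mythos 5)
Your proposal is correct and follows exactly the paper's route: the paper proves this theorem precisely by combining Proposition \ref{p Jordan symm orthog forms} (together with the computation preceding Definition \ref{def purely Jordan Gen der}, showing $G_{_V}=G_{\phi_{_V}}=L(\phi_{_V})$ is a purely Jordan generalized derivation), Proposition \ref{prop generalized Jordan derivations define orthogonal forms}, and Remark \ref{remark orthogonal symmetric is orthogonal Jordan}, with the linearity, mutual-inverse, and isometry checks left as the same routine bookkeeping you record. Nothing is missing; your filling in of the norm identity $\|V\|=\|G_{_V}\|$ is the correct reading of the paper's unproved isometric claim.
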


Actually, Proposition \ref{p Jordan symm orthog forms} gives a bit more:

\begin{corollary}\label{c purely Jordan genen der} Let $\mathcal{J}$ be a JB$^*$-algebra. Then, for every purely Jordan generalized derivation $G : \mathcal{J} \to \mathcal{J}^*$ there exists a unique $\phi\in \mathcal{J}^*$, such that $G = G_{\phi}$, that is, $G (a) = \phi\circ a$ ($a\in \mathcal{J}$).
\end{corollary}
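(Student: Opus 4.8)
The plan is to recognize this corollary as an almost immediate harvest of the three preceding results, repackaging the bijection of Theorem \ref{t one-to-one correspondence between generalized purely Jordan derivations and symmetric orthognal forms} into the explicit form $G = G_{\phi}$. First I would start from an arbitrary purely Jordan generalized derivation $G : \mathcal{J} \to \mathcal{J}^*$ and attach to it the associated form $V_{G} : \mathcal{J}\times\mathcal{J}\to\mathbb{C}$, $V_{G}(a,b) = G(a)(b)$. The defining condition of a purely Jordan generalized derivation, namely $G(a)(b) = G(b)(a)$ for all $a,b\in\mathcal{J}$, says precisely that $V_{G}$ is symmetric.

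Next I would feed $V_{G}$ through the orthogonality machinery. Since $G$ is a generalized Jordan derivation, Proposition \ref{prop generalized Jordan derivations define orthogonal forms} guarantees that $V_{G}$ is orthogonal on $\mathcal{J}_{sa}$. Because $V_{G}$ is also symmetric, Remark \ref{remark orthogonal symmetric is orthogonal Jordan} upgrades this to orthogonality on the whole of $\mathcal{J}$. At this point $V_{G}$ satisfies exactly the hypotheses of Proposition \ref{p Jordan symm orthog forms}, so there exists a unique $\phi\in\mathcal{J}^*$ with $V_{G}(a,b) = \phi(a\circ b)$ for all $a,b\in\mathcal{J}$.

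Finally I would translate this functional identity back into a statement about $G$ itself. Using the module product on the dual, namely $(\phi\circ a)(b) = \phi(a\circ b)$, I rewrite $G(a)(b) = V_{G}(a,b) = \phi(a\circ b) = (\phi\circ a)(b)$, valid for every $b\in\mathcal{J}$; hence $G(a) = \phi\circ a = G_{\phi}(a)$ for all $a$, which is the desired conclusion. Uniqueness of $\phi$ is inherited verbatim from the uniqueness clause of Proposition \ref{p Jordan symm orthog forms}, since any two functionals implementing $G$ would implement the same symmetric orthogonal form $V_{G}$.

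Because every step is a direct invocation of an already-established result, there is no genuine analytic difficulty here; the only point requiring a moment's care is the bookkeeping in the last paragraph, where one must correctly use the Jordan module structure of $\mathcal{J}^*$ (the identity $(\phi\circ a)(b) = \phi(a\circ b)$) to pass from the scalar identity $V_{G}(a,b) = \phi(a\circ b)$ to the operator identity $G(a) = \phi\circ a$. This is precisely the computation already recorded in the discussion preceding Definition \ref{def purely Jordan Gen der}, so the corollary is in effect the converse half of that remark made uniform over all purely Jordan generalized derivations.
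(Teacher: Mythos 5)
Your proof is correct and takes essentially the same route the paper intends: the corollary is presented as an immediate consequence of Proposition \ref{p Jordan symm orthog forms} applied to the symmetric form $V_{G}$, whose orthogonality on $\mathcal{J}_{sa}$ is supplied by Proposition \ref{prop generalized Jordan derivations define orthogonal forms}, with the translation $G(a)=\phi\circ a$ via $(\phi\circ a)(b)=\phi(a\circ b)$ being precisely the computation recorded in the discussion preceding Definition \ref{def purely Jordan Gen der}. Your detour through Remark \ref{remark orthogonal symmetric is orthogonal Jordan} is harmless but unnecessary, since Proposition \ref{p Jordan symm orthog forms} only requires orthogonality on $\mathcal{J}_{sa}$.
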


\subsection{Derivations and anti-symmetric orthogonal forms}

We focus now our study on the anti-symmetric orthogonal forms on a JB$^*$-algebra. We motivate our study with the case of a C$^*$-algebra $A$. By Goldstein's theorem every anti-symmetric orthogonal form $V$ on $A$ writes in the form $V(a,b) = \psi ([a,b])= \psi (a b- ba)$ ($a,b\in A$), where $\psi\in A^*$ (cf. Theorem \ref{thm Goldstein}). Unfortunately, $\psi$ is not uniquely determined by $V$ (see \cite[Proposition 2.6 and comments prior to it]{Gold}). Anyway, the operator $D_{_V} : A\to A^*$, $D_{_V} (a) (b) = V(a,b) = [\psi,a] (b)$ defines a derivation from $A$ into $A^*$ and $D_{_V} (a) (b) =-  D_{_V} (b) (a)$ ($a,b\in A$). On the other hand, when $D: A \to A^*$ is a derivation, it follows from the weak amenability of $A$ (cf. \cite[Corollary 4.2]{Haa83}), that there exists $\psi\in A^*$ satisfying $D(a) = [a,\psi]$. Therefore, the form $V: A\times A \to \mathbb{C}$, $V_{_D} (a,b) = D(a)(b)$  is orthogonal and anti-symmetric. However, when $A$ is replaced with a JB$^*$-algebra, the Lie product doesn't make any sense. To avoid the gap, we shall consider Jordan derivations.\smallskip

It seems natural to ask whether the class of anti-symmetric orthogonal forms on a JB$^*$-algebra $\mathcal{J}$ is empty or not. Here is an example: let $c_1, \ldots, c_m\in \mathcal{J}$ and $\phi_1,\ldots, \phi_m\in \mathcal{J}^*$, and define $V: \mathcal{J} \times \mathcal{J} \to \mathbb{C}$, \begin{equation}\label{eq inner anti-symmetric form}
V(a,b) := \left(\sum_{i=1}^{m} \left[L(\phi_i),L(c_i)\right] (a)\right) (b)
\end{equation} $$= \left(\sum_{i=1}^{m} \left( \phi_i\circ (c_i \circ a) - c_i \circ (\phi_i \circ a)\right)\right) (b) = \sum_{i=1}^{m}  \phi_i\left(b \circ (c_i \circ a) - (c_i \circ b) \circ a)\right),$$ for every $a,b\in \mathcal{J}.$ Clearly, $V$ is an anti-symmetric form on $\mathcal{J}$.  It follows from \eqref{eq implication of orthogonality} that $V(a,b^*)=0$ for every $a\perp b$ in $\mathcal{J},$ that is, $V$ is an orthogonal form on $\mathcal{J}$. Further, the inner Jordan derivation $D: \mathcal{J} \to \mathcal{J}^*$, $D= \sum_{i=1}^{m} \left(L(\phi_i)L(a_i)-L(a_i)L(\phi_i)\right)$ satisfies $V(a,b)= D(a) (b)$ for every $a,b\in \mathcal{J}$.\smallskip

We shall see now that, like in the case of C$^*$-algebras and in the previous example, Jordan derivations from a JB$^*$-algebra $\mathcal{J}$ into its dual exhaust all the possibilities to produce an anti-symmetric orthogonal form on $\mathcal{J}$.
We begin with an strengthened version of Proposition \ref{prop generalized Jordan derivations define orthogonal forms}.

\begin{proposition}\label{prop generalized Jordan derivations define orthogonal forms on the whole JB-algebra} Let $G: \mathcal{J} \to \mathcal{J}^*$ be a generalized Jordan derivation, where $\mathcal{J}$ is a JB$^*$-algebra. Then the form $V_{G} : \mathcal{J}\times \mathcal{J} \to \mathbb{C}$, $V_{G} (a,b) = G(a) (b)$ is orthogonal {\rm(}on the whole $\mathcal{J}${\rm)}.
\end{proposition}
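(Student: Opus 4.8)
The plan is to upgrade Proposition \ref{prop generalized Jordan derivations define orthogonal forms}, which gives orthogonality only on $\mathcal{J}_{sa}$, to orthogonality on the whole of $\mathcal{J}$. Recall that by Definition \ref{def orhtogoanl forms Jordan}, I must show $V_{G}(a,b^*) = G(a)(b^*) = 0$ whenever $a\perp b$ in $\mathcal{J}$, where now $a,b$ are arbitrary (not necessarily self-adjoint). First I would pass to the bidual: by Proposition \ref{p automatic cont generalized Jordan derivations} $G$ is continuous, and by Proposition \ref{p bidual extensions} the bitranspose $G^{**}:\mathcal{J}^{**}\to\mathcal{J}^*$ is a weak$^*$-continuous generalized Jordan derivation. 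Setting $\xi := G^{**}(1)\in\mathcal{J}^*$ (Remark \ref{r element xi in a gen Jordan der}), it suffices to prove the orthogonality statement for $G^{**}$ on $\mathcal{J}^{**}$, since $\mathcal{J}$ sits inside $\mathcal{J}^{**}$ and orthogonality is a local concept preserved under the inclusion.

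The key structural fact I would exploit is the reduction of orthogonality to range tripotents recorded in \eqref{ref orthogo}: if $a\perp b$ then $r(a)\perp r(b)$, and more usefully $a \in \mathcal{J}^{**}_0(r(b))$ and $b\in \mathcal{J}^{**}_0(r(a))$. The central computation is the analogue of \eqref{eq projection and orthogonal} for a tripotent rather than a projection. Given a tripotent $e$ in $\mathcal{J}^{**}$, the Peirce-$2$ space $\mathcal{J}^{**}_2(e)$ is a JBW$^*$-algebra with unit $e$, and I would like to establish that $G^{**}(e)(z) = 0$ for every $z$ orthogonal to $e$ (that is, $z\in\mathcal{J}^{**}_0(e)$). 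The idea is that within $\mathcal{J}^{**}_2(e)$ the element $e$ behaves like a unit: one can try to mimic the projection argument by writing $e = e\circ e$ inside the relevant subalgebra, apply the generalized Jordan derivation identity $G^{**}(e) = 2\, e\circ G^{**}(e) + U_e(\xi)$, evaluate against $z^*$ with $z\perp e$, and use that $e\circ z^* = 0$ and $U_e(z^*)=0$ (both of which follow from $z\in\mathcal{J}^{**}_0(e)$ via Peirce arithmetic) to conclude $G^{**}(e)(z^*)=0$.

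Having the tripotent version, I would then decompose a general orthogonal pair. Given $a\perp b$ in $\mathcal{J}^{**}$, put $e = r(a)$; then $b\in\mathcal{J}^{**}_0(e)$, and $a$ is positive in the JBW$^*$-algebra $\mathcal{J}^{**}_2(e)$, hence $a$ can be approximated in norm by finite linear combinations of mutually orthogonal tripotents lying in $\mathcal{J}^{**}_2(e)$ (using the joint continuous-functional-calculus / spectral approximation for $a$ in the abelian subtriple it generates, exactly as the self-adjoint case used \cite[Proposition 4.2.3]{Hanche}). Each such tripotent $e_j$ satisfies $e_j \perp b$, so the tripotent computation gives $G^{**}(e_j)(b^*)=0$; by linearity and the norm-continuity of $G^{**}$ we obtain $V_{G^{**}}(a,b^*) = G^{**}(a)(b^*) = 0$, and restricting to $\mathcal{J}$ yields $V_G(a,b^*)=0$. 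The main obstacle I anticipate is the tripotent computation itself: unlike a projection, a tripotent $e$ is not idempotent for the Jordan product and need not be self-adjoint, so the clean identity $G^{**}(e)=2\,e\circ G^{**}(e)+U_e(\xi)$ is not available directly. I expect the fix is to work inside the JB$^*$-algebra $\mathcal{J}^{**}_2(e)$ with its \emph{local} product $x\bullet_e y = \{x,e,y\}$ and local unit $e$, translating the generalized Jordan derivation identity into this local structure, and to check carefully that the orthogonality conditions $e\circ z^* = 0$ and $U_e(z^*)=0$ indeed hold from $z\in\mathcal{J}^{**}_0(e)$; verifying these Peirce-arithmetic identities and ensuring the involution interacts correctly with the triple product is where the real care is needed.
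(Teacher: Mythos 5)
Your overall architecture (pass to the bidual via Propositions \ref{p automatic cont generalized Jordan derivations} and \ref{p bidual extensions}, prove the statement for a single tripotent $e\perp b$, then approximate a general $a$ by finite linear combinations of mutually orthogonal tripotents in the subtriple generated by $a$ and conclude by norm continuity) is exactly the paper's skeleton, including the final appeal to a Horn-type spectral approximation \cite[Lemma 3.11]{Ho87}. But the central step --- the tripotent computation --- is not actually carried out in your proposal, and the fix you suggest for it points in a direction that does not work. Two concrete obstructions: first, $G^{**}$ satisfies the generalized-derivation identity \eqref{eq generalized Jordan derivation} only with respect to the \emph{global} Jordan product $\circ$ of $\mathcal{J}^{**}$; there is no reason it should satisfy any Leibniz-type identity for the local product $x\bullet_e y=\{x,e,y\}$ of $\mathcal{J}^{**}_2(e)$, and any attempt to derive one by expanding $\{x,e,y\}=(x\circ e^*)\circ y+(y\circ e^*)\circ x-(x\circ y)\circ e^*$ reintroduces precisely the cross terms you were hoping the local unit would suppress. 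Second, and more fundamentally, the evaluation point $b^*$ lies in the Peirce-zero space $\mathcal{J}^{**}_0(e)$, so an argument carried out entirely inside the unital JB$^*$-algebra $\mathcal{J}^{**}_2(e)$ can never reach the pairing $G^{**}(e)(b^*)$: the quantity you must control couples the Peirce-$2$ and Peirce-$0$ parts, which forces you back into the global product.

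What the paper actually does at this step never changes the product. It starts from the identity $e=\{e,e,e\}=2(e\circ e^*)\circ e-e^2\circ e^*$, which expresses the tripotent condition purely in terms of $\circ$ and the involution, applies the generalized-derivation identity to $G^{**}\bigl((e\circ e^*)\circ e\bigr)$ and $G^{**}\bigl(e^2\circ e^*\bigr)$, evaluates everything at $b^*$, and then eliminates the resulting terms one by one using the operator-commutation consequences of orthogonality prepared in \eqref{eq orthogonality in JB*-algeb}, \eqref{eq implication of orthogonality}, and \eqref{eq implication of orthogonality 2}, until the whole expression collapses to $\xi\bigl(b^*\circ\{e,e,e\}\bigr)=\xi(b^*\circ e)=0$. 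Your observations that $e\circ b^*=0$ and $U_e(b^*)=\{e,b,e\}=0$ are correct but far from sufficient: the computation repeatedly needs the stronger fact that $a\perp b$ forces $a$ and $b^*$ to operator commute, i.e.\ $(c\circ b^*)\circ a=(a\circ c)\circ b^*$ for \emph{arbitrary} $c$, which is \eqref{eq implication of orthogonality} and is applied several times (including with $c$ a product of copies of $e$ and $e^*$). So the gap is genuine: your proof delegates its only nontrivial step to a local-unit device that cannot produce the required identity, whereas the known proof replaces that device with a direct, global-product expansion of $e=\{e,e,e\}$.
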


\begin{proof} We already know that every generalized Jordan derivation $G: \mathcal{J} \to \mathcal{J}^*$ is continuous (cf. Proposition \ref{p automatic cont generalized Jordan derivations}). By Proposition \ref{p bidual extensions}, $G^{**}: \mathcal{J}^{**} \to \mathcal{J}^*$ is a generalized Jordan derivation too. Let $\xi=G^{**} (1)$.\smallskip

Let $e$ be a tripotent in $\mathcal{J}^{**}$ and let $b$ be any element in $\mathcal{J}^{**}$ such that $e\perp b$. Since $\{e,e,e\}=2 (e\circ e^*)\circ e - e^2 \circ e^* =e$ we deduce that $$G^{**}(e) = 2 G^{**}( (e\circ e^*)\circ e) - G^{**} ( e^2 \circ e^* ) $$ $$= 2 G^{**} ( e\circ e^* ) \circ e + 2 ( e\circ e^* ) \circ G^{**} ( e) -2 U_{e\circ e^*, e} (\xi)$$
$$-  G^{**} ( e^2) \circ e^* -   e^2\circ  G^{**} (e^*) + U_{e^2, e^*} (\xi).$$

Therefore, \begin{equation}\label{eq tripotent and orthogonal} G^{**}(e) (b^*) = 2 G^{**} ( e\circ e^* ) \Big(b^* \circ e\Big)  + 2   G^{**} (e) \Big(( e\circ e^* )\circ b^*\Big)\end{equation}  $$-2 \xi \Big(( e\circ e^* ) \circ (e\circ b^*) + (( e\circ e^* )\circ b^*) \circ e - (( e\circ e^* )\circ e) \circ b^* \Big)$$
$$-  G^{**} ( e^2) \Big( e^* \circ b^*\Big) -    G^{**} (e^*) \Big(e^2\circ b^*\Big) + \xi \Big(e^2  \circ (e^*\circ b^*) + (e^2 \circ b^*) \circ e^* - (e^2 \circ e^*) \circ b^* \Big)$$ $$=\hbox{ (by \eqref{eq orthogonality in JB*-algeb}, \eqref{eq implication of orthogonality}, and \eqref{eq implication of orthogonality 2}) } = 2 G^{**} (e) \Big(( e\circ e^* )\circ b^*\Big)  -  G^{**} ( e^2) \Big( e^* \circ b^*\Big) $$ $$+ \xi \Big(e^2  \circ (e^*\circ b^*)  - (e^2 \circ e^*) \circ b^* \Big)$$
$$= 2 G^{**} (e) \Big(( e\circ e^* )\circ b^*\Big) -  2\left(e\circ G^{**} (e)\right) \Big( e^* \circ b^*\Big) +  U_{e} (\xi) (e^* \circ b^*)  $$ $$+ \xi \Big(e^2  \circ (e^*\circ b^*)  - (e^2 \circ e^*) \circ b^* \Big)$$
$$ = 2 G^{**} (e) \Big(( e\circ e^* )\circ b^*- ( b^*\circ e^* )\circ e\Big) +\xi \Big( 2 e\circ (e\circ (e^* \circ b^*)) - e^2\circ (e^* \circ b^*) \Big)$$
$$+ \xi \Big(e^2  \circ (e^*\circ b^*)  - (e^2 \circ e^*) \circ b^* \Big)$$ $$=\hbox{(by \eqref{eq implication of orthogonality}) } = \xi \Big( 2 e\circ (e\circ (e^* \circ b^*)) - (e^2 \circ e^*) \circ b^* \Big) $$ $$=\hbox{(\eqref{eq implication of orthogonality} applied twice) } = \xi \Big( 2 b^*\circ (e\circ (e^* \circ e)) - b^* \circ (e^2 \circ e^*)   \Big)$$ $$= \xi \Big(  b^* \circ\Big( 2 (e\circ (e^* \circ e)) - (e^2 \circ e^*) \Big) \Big)= \xi \Big(  b^* \circ\{e,e,e\} \Big)= \xi \Big(  b^* \circ e \Big)=0,$$ where in the last step we applied \eqref{eq orthogonality in JB*-algeb}.\smallskip

Let us take $a.b$ in $\mathcal{J}^{**}$, with $a\perp b$. The characterizations given in \eqref{ref orthogo} imply that the JBW$^*$-triple $\mathcal{J}^{**}_a$ generated by $a$ is orthogonal to $b$, that is, $c\perp b$ for every $c\in \mathcal{J}^{**}_a$. Lemma 3.11 in \cite{Ho87} guarantees that the element $a$ can be approximated in norm by finite linear combinations of mutually orthogonal projections in $\mathcal{J}^{**}_a$. Finally, the fact proved in \eqref{eq tripotent and orthogonal}, the continuity of $G^{**}$, and the previous comments imply that $ V_{_{G^{**}}} (a,b^*) = G^{**} (a)(b^*)= 0.$
 \end{proof}

We shall prove now that every anti-symmetric orthogonal form is given by a Jordan derivation.

\begin{proposition}\label{p Jordan anti-symm orthog forms}
Let $V : \mathcal{J}\times \mathcal{J} \to \mathbb{C}$ be an anti-symmetric form on a JB$^*$-algebra which is  orthogonal on $\mathcal{J}_{sa}$. Then the mapping $D_{_V}: \mathcal{J} \to \mathcal{J}^*$, $D_{_V}(a) (b) = V(a,b)$ {\rm(}$a,b\in \mathcal{J}${\rm)} is a Jordan derivation. 
\end{proposition}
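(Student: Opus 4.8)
The plan is to show that the anti-symmetric orthogonal form $V$ gives rise to a linear map $D_{_V}$ satisfying the Jordan derivation identity $D_{_V}(a\circ b) = a\circ D_{_V}(b) + D_{_V}(a)\circ b$. Unwinding the definitions via the module action \eqref{eq Jordan module structure in the dual}, this identity is equivalent to the scalar identity
\begin{equation}\label{eq target jordan deriv}
V(a\circ b, c) = V(b, a\circ c) + V(a, b\circ c),
\end{equation}
for all $a,b,c\in \mathcal{J}$. So the entire task reduces to verifying \eqref{eq target jordan deriv}. First I would reduce to the self-adjoint, unital case: by continuity of $D_{_V}$ (which follows since $V$ is a bounded bilinear form, hence $D_{_V}$ is bounded) and separate weak$^*$-continuity of $V^{***}$, it suffices to prove \eqref{eq target jordan deriv} for self-adjoint $a,b,c$, passing if necessary to $\mathcal{J}^{**}$, which is unital. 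By polarization and the spectral-type approximation used in Propositions \ref{p Jordan symm orthog forms} and \ref{prop generalized Jordan derivations define orthogonal forms on the whole JB-algebra}, namely that every self-adjoint element is a norm limit of finite linear combinations of mutually orthogonal projections, it should suffice to check \eqref{eq target jordan deriv} when $a,b,c$ are projections, or even when $a=b=c=p$ is a single projection together with off-diagonal Peirce pieces.

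The core computation is the projection case. Here I would exploit orthogonality systematically through the Peirce decomposition associated to a projection $p$ in $\mathcal{J}^{**}$. The key input, analogous to \eqref{eq projection and orthogonal} and \eqref{eq tripotent and orthogonal}, is that $V^{***}(x, y) = 0$ whenever $x\perp y$; applied to Peirce components this forces many terms to vanish. Writing a general element in its Peirce-$2$, Peirce-$1$, and Peirce-$0$ parts relative to $p$ and using the Peirce arithmetic rules $\J{E_i(p)}{E_j(p)}{E_k(p)}\subseteq E_{i-j+k}(p)$ together with $E_2(p)\perp E_0(p)$, I would reduce \eqref{eq target jordan deriv} to identities living inside a single Peirce space, where the Jordan product is well controlled. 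The anti-symmetry of $V$ is essential here: it kills the ``symmetric'' contributions and leaves precisely the Lie-type terms that must satisfy the derivation identity, mirroring how in the C$^*$-case the anti-symmetric orthogonal form yields a derivation rather than a generalized derivation.

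The main obstacle I anticipate is the bookkeeping required to verify \eqref{eq target jordan deriv} directly from orthogonality relations, without the luxury of Goldstein's theorem to supply an explicit formula $V(a,b)=\psi([a,b])$. In the associative C$^*$-setting one simply reads off that $D_{_V}(a)=[a,\psi]$ is a derivation; in the Jordan setting I must instead extract the derivation identity abstractly from the vanishing conditions $V^{***}(x,y)=0$ for $x\perp y$. Concretely, the hard step will be handling the Peirce-$1$ interactions, where products of elements from $E_1(p)$ can land in $E_2(p)\oplus E_0(p)$ and the orthogonality relations are most delicate; I expect to need the operator-commutation consequence \eqref{eq implication of orthogonality} repeatedly, exactly as in the proof of Proposition \ref{prop generalized Jordan derivations define orthogonal forms on the whole JB-algebra}, to collapse the cross terms. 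Once the projection case yields \eqref{eq target jordan deriv}, extending by linearity, norm-approximation, and weak$^*$-continuity back to all of $\mathcal{J}$ is routine and completes the proof that $D_{_V}$ is a Jordan derivation.
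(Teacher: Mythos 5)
There is a genuine gap here, and it lies at the heart of your plan. You propose to verify the trilinear identity $V(a\circ b,c)=V(b,a\circ c)+V(a,b\circ c)$ directly for projections via Peirce bookkeeping, with the orthogonality hypothesis as the ``key input''. But orthogonality only tells you that $V$ vanishes on \emph{orthogonal} pairs, while the derivation identity involves three independent elements: after spectral approximation, $a$, $b$ and $c$ become combinations of projections from three different, mutually non-commuting spectral resolutions, and for a generic triple $(p,q,r)$ of such projections there are no orthogonality relations at all, so terms like $V(p\circ q,r)$ are completely uncontrolled; a Peirce decomposition relative to a single projection cannot organize all three slots simultaneously. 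Even the tiny special case $a=b=p$ with $c$ in the Peirce-$2$ space of $p$ already forces the requirement $V(p,c)=0$ for the \emph{non-orthogonal} pair $(p,c)$, which no Peirce arithmetic will deliver. A second, independent problem is your silent passage to $\mathcal{J}^{**}$: orthogonality is not stable under weak$^*$-limits, so the hypothesis ``$V$ orthogonal on $\mathcal{J}_{sa}$'' does not automatically yield $V^{***}(x,y)=0$ for orthogonal $x,y\in \mathcal{J}^{**}_{sa}$; in the symmetric case (Proposition \ref{p Jordan symm orthog forms}) the paper needs a bespoke construction (the functions $h_n$, $p_\varepsilon$, $g_\varepsilon$ inside a commutative subtriple) precisely to get around this, and you have not supplied any substitute.

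The missing idea is a bridge from a \emph{one-variable} condition to the full derivation identity, and the paper supplies it with the Haagerup--Laustsen polarization argument (Lemma \ref{l HaaLaust 3.4}): for an anti-symmetric form, the diagonal condition $V(a^2,a)=0$ for all $a$ in the additive subgroup $\mathcal{U}=\mathcal{J}_{sa}$ already forces $D_{_V}(a^2)=2\, a\circ D_{_V}(a)$, by expanding $V((a\pm b)^2,a\pm b)$ and using anti-symmetry. The diagonal condition itself is obtained with no bidual work at all: the JB$^*$-subalgebra $B\subseteq \mathcal{J}$ generated by a single self-adjoint element is a commutative C$^*$-algebra, and Goldstein's theorem (Theorem \ref{thm Goldstein}) applied to $V|_{B\times B}$ gives $V(x,y)=\varphi(x\circ y)$ there (the Lie brackets vanish), which is symmetric; anti-symmetry then forces $V\equiv 0$ on $B\times B$, in particular $V(a^2,a)=0$. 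Note that, contrary to your stated strategy of working ``without the luxury of Goldstein's theorem'', the paper \emph{does} use Goldstein --- but only on commutative subalgebras, which is exactly what sidesteps both obstacles above. To salvage your route you would need to (i) justify orthogonality-vanishing for projections in $\mathcal{J}^{**}$ and (ii) replace the direct trilinear verification by the diagonal condition plus a polarization step, at which point you would have reconstructed the paper's proof by a longer path.
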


Our strategy will follow some of the arguments given by U. Haagerup and N.J. Laustsen in \cite[\S 3]{HaaLaust}, the Jordan setting will require some simple adaptations and particularizations. The proof will be divided into several lemmas. The next lemma was established in  \cite[Lemma 3.3]{HaaLaust} for associative Banach algebras, however the proof, which is left to the reader, is also valid for JB$^*$-algebras.

\begin{lemma}\label{l n derivative Jordan}
Let $V : \mathcal{J}\times \mathcal{J} \to \mathbb{C}$ be a form on a JB$^*$-algebra. Suppose that $f,g: \mathbb{R} \to \mathcal{J}$ are infinitely differentiable functions at a point $t_0\in \mathbb{R}$. Then the map $t\mapsto V(f(t),g(t))$, $\mathbb{R}\to \mathbb{C}$, is infinitely differentiable at $t_0$ and its $n$'th derivative is given by $$\sum_{k=0}^{n} \left(
                                                             \begin{array}{c}
                                                               n \\
                                                               k \\
                                                             \end{array}
                                                           \right) V(f^{(k)} (t_0),g^{(n-k)} (t_0)). $$ $\hfill\Box$
\end{lemma}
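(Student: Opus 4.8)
The plan is to prove this by induction on $n$, treating it as a bilinear analogue of the classical Leibniz product rule. The essential structural fact I will exploit is that $V$ is a fixed \emph{continuous bilinear} form, so it behaves like a product for the purposes of differentiation: the continuity of $V$ guarantees that I may pass limits through each slot, which is exactly what is needed to differentiate $t\mapsto V(f(t),g(t))$ term by term.

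First I would establish the base case $n=1$. Fix $t_0$ and consider the difference quotient
\[
\frac{V(f(t_0+h),g(t_0+h)) - V(f(t_0),g(t_0))}{h}.
\]
Using bilinearity of $V$ I would split this, in the standard product-rule fashion, as
\[
V\!\left(\frac{f(t_0+h)-f(t_0)}{h},\, g(t_0+h)\right) + V\!\left(f(t_0),\, \frac{g(t_0+h)-g(t_0)}{h}\right).
\]
Since $f$ and $g$ are differentiable at $t_0$, the difference quotients converge to $f'(t_0)$ and $g'(t_0)$ in the norm of $\mathcal{J}$, and $g(t_0+h)\to g(t_0)$; the continuity (hence joint continuity, by bilinearity and boundedness) of $V$ then lets me pass to the limit in each slot, yielding $V(f'(t_0),g(t_0)) + V(f(t_0),g'(t_0))$. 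This is precisely the $n=1$ instance of the claimed formula, and it simultaneously shows $t\mapsto V(f(t),g(t))$ is differentiable at $t_0$.

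For the inductive step, I would assume the $n$'th derivative equals $\sum_{k=0}^{n}\binom{n}{k} V(f^{(k)}(t_0),g^{(n-k)}(t_0))$. Because $f$ and $g$ are infinitely differentiable, each $f^{(k)}$ and $g^{(n-k)}$ is again differentiable, so I may apply the already-proven $n=1$ rule to each summand $t\mapsto V(f^{(k)}(t),g^{(n-k)}(t))$, differentiating to get $V(f^{(k+1)},g^{(n-k)}) + V(f^{(k)},g^{(n-k+1)})$ at $t_0$. Summing against the binomial weights $\binom{n}{k}$ and reindexing, the standard Pascal-rule recombination $\binom{n}{k-1}+\binom{n}{k}=\binom{n+1}{k}$ collapses the double sum into $\sum_{k=0}^{n+1}\binom{n+1}{k} V(f^{(k)}(t_0),g^{(n+1-k)}(t_0))$, completing the induction.

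The only genuine point requiring care, and thus the main obstacle, is the base case limit-passage: I must justify that the joint limit as $h\to 0$ may be evaluated slot-by-slot. This is where continuity of $V$ is indispensable. Concretely, a bounded bilinear form satisfies $|V(x,y)|\le \|V\|\,\|x\|\,\|y\|$, so the error terms arising from moving both arguments at once are controlled by products of a convergent difference quotient with a vanishing increment, and hence vanish; the remaining two terms converge as claimed. Once this is secured in the base case, the inductive step is a purely formal binomial manipulation requiring no further analytic input, since every derivative invoked exists by the infinite-differentiability hypothesis on $f$ and $g$.
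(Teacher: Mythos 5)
Your proof is correct and is exactly the argument the paper has in mind: the paper leaves this proof to the reader, noting it is the standard Leibniz-rule argument of Haagerup--Laustsen, which uses only the bilinearity and (joint) continuity of $V$ and never the algebra product, precisely as you do. Your base case plus Pascal-rule induction is that standard proof, so there is nothing to add.
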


The next lemma is also due to Haagerup and Laustsen, who established it for associative Banach algebras in \cite[Lemma 3.4]{HaaLaust}. The proof given in the just quoted paper remains valid in the Jordan setting, the details are included here for completeness reasons.

\begin{lemma}\label{l HaaLaust 3.4} Let $\mathcal{J}$ be a Jordan Banach algebra, let $\mathcal{U}$ be an additive subgroup of $\mathcal{J}$ whose linear span coincides with $\mathcal{J}$. Let $V : \mathcal{J}\times \mathcal{J} \to \mathbb{C}$ be an anti-symmetric form satisfying $V(a^2,a) =0$ for every $a\in \mathcal{U}$. Then the bounded linear operator $D_{_V}: \mathcal{J} \to \mathcal{J}^*$ given by $D_{_V} (a) (b) = V(a,b)$ for all $a,b\in \mathcal{J}$ is a Jordan derivation.
\end{lemma}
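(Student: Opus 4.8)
The plan is to reduce the Jordan derivation property of $D_{_V}$ to a single symmetric trilinear identity, and then to establish that identity by polarization, using the additive-subgroup hypothesis in the polarization step and the spanning hypothesis in a final linear extension. Recall that $\mathcal{J}^*$ carries the module product \eqref{eq Jordan module structure in the dual}, so that $(b\circ\varphi)(c)=\varphi(b\circ c)$ for $\varphi\in\mathcal{J}^*$. Evaluating the desired identity $D_{_V}(a\circ b)=D_{_V}(a)\circ b+a\circ D_{_V}(b)$ at an arbitrary $c\in\mathcal{J}$ and unwinding the definition $D_{_V}(x)(y)=V(x,y)$ gives $V(a\circ b,c)=V(a,b\circ c)+V(b,a\circ c)$. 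Invoking the anti-symmetry of $V$ together with the commutativity of $\circ$, this is equivalent to the vanishing of
$$T(a,b,c):=V(a\circ b,c)+V(b\circ c,a)+V(c\circ a,b)$$
for all $a,b,c\in\mathcal{J}$. Hence it suffices to prove that $T\equiv 0$.

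Next I would record the two structural features of $T$ on which everything rests. First, $T$ is a $\mathbb{C}$-trilinear form on $\mathcal{J}$ (being built bilinearly from $V$ and $\circ$), and it is \emph{symmetric} in its three arguments: interchanging any two of $a,b,c$ permutes the three summands after using $x\circ y=y\circ x$. Second, its restriction to the diagonal is $T(a,a,a)=3\,V(a^2,a)$, which by hypothesis vanishes whenever $a\in\mathcal{U}$.

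The heart of the argument is then to promote the vanishing of the diagonal on $\mathcal{U}$ to the vanishing of $T$ on $\mathcal{U}\times\mathcal{U}\times\mathcal{U}$ by polarization. For fixed $a,b\in\mathcal{U}$ the function $t\mapsto V\big((a+tb)^2,a+tb\big)$ is a polynomial of degree at most $3$; since $\mathcal{U}$ is an additive subgroup, $a+tb\in\mathcal{U}$ for every $t\in\mathbb{Z}$, so this polynomial vanishes at infinitely many integers and is therefore identically zero (one may, if preferred, read off its coefficients through Lemma \ref{l n derivative Jordan}). Applying this together with the standard finite-difference polarization formula, which expresses $2\,T(a,b,c)$ as an alternating sum of diagonal values $V(x^2,x)$ evaluated at the points $a+b+c,\ a+b,\ b+c,\ c+a,\ a,\ b,\ c$, all of which lie in $\mathcal{U}$, yields $T(a,b,c)=0$ for all $a,b,c\in\mathcal{U}$.

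Finally, because $\mathcal{U}$ spans $\mathcal{J}$ and $T$ is trilinear, writing arbitrary elements of $\mathcal{J}$ as finite linear combinations of members of $\mathcal{U}$ and expanding gives $T\equiv 0$ on $\mathcal{J}\times\mathcal{J}\times\mathcal{J}$; thus $D_{_V}$ is a Jordan derivation, and it is bounded since $V$ is a continuous form. The step I expect to require the most care is not any deep computation but the bookkeeping at the two ends: first, correctly tracking the module action \eqref{eq Jordan module structure in the dual} so that the derivation identity genuinely collapses into the symmetric object $T$, and second, making sure that every point entering the polarization lies inside $\mathcal{U}$ (so the hypothesis applies) before the concluding complex-linear extension off $\mathcal{U}$.
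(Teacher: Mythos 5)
Your proof is correct, but it is organized differently from the paper's. The paper works with the one-variable form of the derivation identity: for $a,b\in\mathcal{U}$ it expands $V((a+b)^2,a+b)$ and $V((a-b)^2,a-b)$ (both zero because $\mathcal{U}$ is an additive subgroup) and, using anti-symmetry of $V$, obtains directly $V(a^2,b)=2V(a,a\circ b)$, i.e.\ $D_{_V}(a^2)(b)=2\big(a\circ D_{_V}(a)\big)(b)$ on $\mathcal{U}\times\mathcal{U}$, and then extends by bilinearity and the spanning hypothesis. You instead symmetrize the two-variable identity into the trilinear form $T(a,b,c)=V(a\circ b,c)+V(b\circ c,a)+V(c\circ a,b)$, whose full symmetry (needing only commutativity of $\circ$; anti-symmetry of $V$ enters earlier, in collapsing the derivation identity to $T\equiv 0$) and diagonal value $T(a,a,a)=3V(a^2,a)$ let you kill $T$ on $\mathcal{U}^3$ by the seven-point cubic polarization and then on $\mathcal{J}^3$ by trilinearity. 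Your route is slightly heavier but buys a cleaner endgame: it yields the product form $D_{_V}(a\circ b)=D_{_V}(a)\circ b+a\circ D_{_V}(b)$ directly, with the extension off $\mathcal{U}$ a pure multilinearity statement, whereas the paper's passage from the squared identity on $\mathcal{U}$ to the full derivation identity on $\mathcal{J}$ tacitly involves one more polarization, of the symmetric bilinear map $(a,b)\mapsto D_{_V}(a\circ b)-D_{_V}(a)\circ b-a\circ D_{_V}(b)$. Two small repairs to your write-up: with $Q(x)=T(x,x,x)$ the cubic polarization identity reads $Q(a+b+c)-Q(a+b)-Q(b+c)-Q(c+a)+Q(a)+Q(b)+Q(c)=6\,T(a,b,c)$, not $2\,T(a,b,c)$ --- harmless over $\mathbb{C}$ --- and the digression on the polynomial $t\mapsto V\big((a+tb)^2,a+tb\big)$ vanishing at integer points is unnecessary, since the subgroup property already places all seven polarization points $a+b+c$, $a+b$, $b+c$, $c+a$, $a$, $b$, $c$ inside $\mathcal{U}$.
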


\begin{proof} Let us take $a,b\in \mathcal{U}$. It follows from our hypothesis that
$$D_{_V} (a^2) (b) - 2 \Big(a\circ D_{_V} (a)\Big) (b) =D_{_V} (a^2) (b) - 2 D_{_V} (a) (a\circ b)  $$ $$=  V(a^2,b) + 2 V(a\circ b, a)=  V(a^2,b) - 2 V(a, a\circ b)$$ $$= \frac{ V((a+b)^2, a+b) - V((a-b)^2, a-b)- 2 V(b^2,b)}{2} =0.$$ This implies that $D_{_V} (a^2) (b) = 2 \Big(a\circ D_{_V} (a)\Big) (b)$, for every $a,b\in \mathcal{U}$. It follows from the bilinearity and continuity of $V$, and the norm density of the linear span of $\mathcal{U}$ that $D_{_V} (a^2) = 2 a\circ D_{_V} (a)$, for every $a\in \mathcal{J}$, witnessing that $D_{_V}: \mathcal{J} \to \mathcal{J}^*$ is a Jordan derivation.
\end{proof}

We deal now with the proof of Proposition  \ref{p Jordan anti-symm orthog forms}.

\begin{proof}[Proof of Proposition \ref{p Jordan anti-symm orthog forms}] For each $a\in \mathcal{J}_{sa}$, let $B$ denote the JB$^*$-subalgebra of $\mathcal{J}$ generated by $a$. It is known that $B$ is isometrically isomorphic to a commutative C$^*$-algebra (see \cite[Theorem 3.2.2 and 3.2.3]{Hanche}). Clearly, $V|_{B\times B}: B\times B \to \mathbb{C}$ is an anti-symmetric form which is orthogonal on $B_{sa}$ (and hence orthogonal on $B$). Since $B$ is a commutative unital C$^*$-algebra, an application of Goldstein's theorem (cf. Theorem \ref{thm Goldstein}) shows that $V (x, y) =0.$ for every $x,y\in B$. In particular, $V(a^2,a) = 0$ for every $a\in \mathcal{J}_{sa}$. Lemma \ref{l HaaLaust 3.4} guarantees that $D_{_V}: \mathcal{J} \to \mathcal{J}^*$ is a Jordan derivation and $D_{_V} (a) (b) = -D_{_V} (b) (a)$, for every $a,b\in \mathcal{J}$.
\end{proof}

\begin{definition}\label{def Lie Jordan der}{\rm Let $\mathcal{J}$ be a JB$^*$-algebra. A Jordan derivation $D$ from $\mathcal{J}$ into $\mathcal{J}^*$ is said to be a \emph{Lie Jordan derivation} if $D(a) (b) =- D(b) (a)$, for every $a,b\in \mathcal{J}$.}
\end{definition}

Propositions \ref{prop generalized Jordan derivations define orthogonal forms on the whole JB-algebra} and \ref{p Jordan anti-symm orthog forms} give:

\begin{theorem}\label{t one-to-one correspondence between generalized purely Jordan derivations and anti-symmetric orthognal forms}  Let $\mathcal{J}$ be a JB$^*$-algebra. Let $\mathcal{OF}_{as}(\mathcal{J})$ denote the Banach space of all anti-symmetric orthogonal forms on $\mathcal{J}$, and let $\mathcal{L}ie\mathcal{JD}er(\mathcal{J},\mathcal{J}^*)$ the Banach space of all Lie Jordan derivations from $\mathcal{J}$ into $\mathcal{J}^*$. For each $V\in \mathcal{OF}_{as}(\mathcal{J})$ we define $D_{_V} : \mathcal{J} \to \mathcal{J}^*$ in $\mathcal{L}ie\mathcal{JD}er(\mathcal{J},\mathcal{J}^*)$ given by $D_{_V} (a) (b) = V(a,b)$, and for each $D\in \mathcal{L}ie\mathcal{JD}er(\mathcal{J},\mathcal{J}^*)$ we set $V_{_D} : \mathcal{J}\times \mathcal{J} \to C$, $V_{_D} (a,b) := D(a) (b)$ $(a,b\in \mathcal{J})$.
Then the mappings
$$\mathcal{OF}_{as}(\mathcal{J}) \to \mathcal{L}\hbox{ie}\mathcal{JD}\hbox{er}(\mathcal{J},\mathcal{J}^*), \ \ \mathcal{L}\hbox{ie}\mathcal{JD}\hbox{er}(\mathcal{J},\mathcal{J}^*) \to \mathcal{OF}_{as}(\mathcal{J}),$$ $$V\mapsto D_{_V}, \ \ \ \  \ \ \ \  \ \ \ \  \ \ \ \  \ \ \ \  \ \ \ \  \ \ \ \ \ \ \ \  \ \ \ \ D\mapsto V_{_D},$$ define two isometric linear bijections and are inverses of each other.$\hfill\Box$
\end{theorem}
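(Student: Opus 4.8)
The plan is to assemble this statement from the two preceding propositions, since all the genuine content already lies there; what remains is to verify that the two assignments are well-defined, mutually inverse, linear, and isometric. First I would check well-definedness. Given $V\in\mathcal{OF}_{as}(\mathcal{J})$, the form $V$ is in particular orthogonal on $\mathcal{J}_{sa}$, so Proposition \ref{p Jordan anti-symm orthog forms} guarantees that $D_{_V}(a)(b)=V(a,b)$ defines a Jordan derivation. Since $V$ is anti-symmetric, $D_{_V}(a)(b)=V(a,b)=-V(b,a)=-D_{_V}(b)(a)$, so $D_{_V}$ is in fact a Lie Jordan derivation and the assignment $V\mapsto D_{_V}$ indeed lands in $\mathcal{L}ie\mathcal{JD}er(\mathcal{J},\mathcal{J}^*)$. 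Conversely, a Lie Jordan derivation $D$ is in particular a Jordan derivation, hence a generalized Jordan derivation (with the defining element $\xi=0$, as recalled in Section \ref{sec:derivations}), so Proposition \ref{prop generalized Jordan derivations define orthogonal forms on the whole JB-algebra} shows that $V_{_D}(a,b)=D(a)(b)$ is orthogonal on the whole of $\mathcal{J}$; the Lie condition $D(a)(b)=-D(b)(a)$ forces $V_{_D}$ to be anti-symmetric, so $D\mapsto V_{_D}$ lands in $\mathcal{OF}_{as}(\mathcal{J})$.

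Next I would record the identities that make the two maps mutually inverse. Unwinding the definitions, $V_{D_{_V}}(a,b)=D_{_V}(a)(b)=V(a,b)$ for all $a,b\in\mathcal{J}$, whence $V_{D_{_V}}=V$, and likewise $D_{V_{_D}}(a)(b)=V_{_D}(a,b)=D(a)(b)$ gives $D_{V_{_D}}=D$. Linearity of both assignments is immediate from the formulas $D_{_V}(a)(b)=V(a,b)$ and $V_{_D}(a,b)=D(a)(b)$, which are linear in $V$ and in $D$ respectively.

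Finally, for the isometric statement I would simply compare the two norms. The norm of a form is $\|V\|=\sup\{|V(a,b)|:\|a\|\le 1,\ \|b\|\le 1\}$, while the operator norm of $D_{_V}:\mathcal{J}\to\mathcal{J}^*$ is $\|D_{_V}\|=\sup_{\|a\|\le1}\|D_{_V}(a)\|=\sup_{\|a\|\le1}\sup_{\|b\|\le1}|D_{_V}(a)(b)|$, and since $D_{_V}(a)(b)=V(a,b)$ these two suprema coincide, giving $\|D_{_V}\|=\|V\|$; the reverse identity $\|V_{_D}\|=\|D\|$ is verified in exactly the same way. I do not expect any genuine obstacle here: the substantive analytic work---automatic continuity, the passage to the bitranspose $G^{**}$, and the orthogonality of $V_{_G}$ on the \emph{whole} algebra---has already been carried out in Propositions \ref{p automatic cont generalized Jordan derivations}, \ref{p bidual extensions}, \ref{prop generalized Jordan derivations define orthogonal forms on the whole JB-algebra}, and \ref{p Jordan anti-symm orthog forms}, so the theorem is a clean bookkeeping assembly of these facts. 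The only point meriting a moment's care is confirming that a Lie Jordan derivation qualifies as a generalized Jordan derivation, so that Proposition \ref{prop generalized Jordan derivations define orthogonal forms on the whole JB-algebra} applies; this follows because every Jordan derivation satisfies $D(1)=0$ and is therefore a generalized Jordan derivation with $\xi=0$.
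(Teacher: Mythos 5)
Your proposal is correct and matches the paper exactly: the paper derives this theorem directly from Propositions \ref{prop generalized Jordan derivations define orthogonal forms on the whole JB-algebra} and \ref{p Jordan anti-symm orthog forms}, leaving the same routine verifications (well-definedness, mutual inversion, linearity, isometry) implicit that you spell out. The only cosmetic remark is that your appeal to $D(1)=0$ presumes a unit; in the non-unital case a Jordan derivation is a generalized Jordan derivation simply because the defining identity \eqref{eq generalized Jordan derivation} holds with $\xi=0$.
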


Our final result subsumes the main conclusions of the last subsections.

\begin{corollary}\label{c characterization of orthognal forms} Let $V : \mathcal{J}\times \mathcal{J} \to \mathbb{C}$ be a form on a JB$^*$-algebra. The following statements are equivalent:\begin{enumerate}[$(a)$] \item $V$ is orthogonal;
\item $V$ is orthogonal on $\mathcal{J}_{sa}$;
\item There exist a (unique) purely Jordan generalized derivation $G: \mathcal{J} \to \mathcal{J}^*$ and a (unique) Lie Jordan derivation $D: \mathcal{J} \to \mathcal{J}^*$ such that $V(a,b) = G(a) (b) +  D(a) (b)$, for every $a,b\in \mathcal{J}$;
\item There exist a (unique) functional $\phi\in \mathcal{J}^*$ and a (unique) Lie Jordan derivation $D: \mathcal{J} \to \mathcal{J}^*$ such that $V(a,b) = G_{\phi} (a) (b) +  D(a) (b)$, for every $a,b\in \mathcal{J}$.
\end{enumerate}
\end{corollary}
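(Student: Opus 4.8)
The plan is to prove Corollary \ref{c characterization of orthognal forms} by reducing it to the two decomposition theorems already established, namely Theorem \ref{t one-to-one correspondence between generalized purely Jordan derivations and symmetric orthognal forms} and Theorem \ref{t one-to-one correspondence between generalized purely Jordan derivations and anti-symmetric orthognal forms}, together with Corollary \ref{c purely Jordan genen der}. The whole argument will rest on the elementary observation that every bilinear form $V$ splits uniquely into a symmetric and an anti-symmetric part, and that these two pieces are exactly what the previous theorems parametrize.

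First I would show that $(a)\Leftrightarrow(b)$. The implication $(a)\Rightarrow(b)$ is immediate from the definitions. For $(b)\Rightarrow(a)$, I would write any form $V$ orthogonal on $\mathcal{J}_{sa}$ as $V = V_s + V_{as}$, where $V_s(a,b) = \frac12\big(V(a,b)+V(b,a)\big)$ is symmetric and $V_{as}(a,b)=\frac12\big(V(a,b)-V(b,a)\big)$ is anti-symmetric. Both $V_s$ and $V_{as}$ inherit orthogonality on $\mathcal{J}_{sa}$ from $V$, since for self-adjoint $a\perp b$ we have $b\perp a$ as well, so $V(a,b)=V(b,a)=0$. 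By Remark \ref{remark orthogonal symmetric is orthogonal Jordan}, $V_s$ is then orthogonal on all of $\mathcal{J}$; by Proposition \ref{p Jordan anti-symm orthog forms} together with Proposition \ref{prop generalized Jordan derivations define orthogonal forms on the whole JB-algebra}, the anti-symmetric part $V_{as}$ gives rise to a Jordan derivation $D_{_{V_{as}}}$ whose associated form is orthogonal on the whole $\mathcal{J}$. Adding these, $V=V_s+V_{as}$ is orthogonal, giving $(a)$.

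Next I would establish $(b)\Rightarrow(c)$ using the same decomposition. Applying Theorem \ref{t one-to-one correspondence between generalized purely Jordan derivations and symmetric orthognal forms} to the symmetric orthogonal form $V_s$ produces a unique purely Jordan generalized derivation $G=G_{_{V_s}}$ with $G(a)(b)=V_s(a,b)$, and applying Theorem \ref{t one-to-one correspondence between generalized purely Jordan derivations and anti-symmetric orthognal forms} to $V_{as}$ produces a unique Lie Jordan derivation $D=D_{_{V_{as}}}$ with $D(a)(b)=V_{as}(a,b)$. Then $V(a,b)=G(a)(b)+D(a)(b)$ for all $a,b$. The implication $(c)\Rightarrow(a)$ follows because $G$ yields an orthogonal form (Proposition \ref{prop generalized Jordan derivations define orthogonal forms on the whole JB-algebra}) and $D$, being a Lie Jordan derivation, also yields one by the same proposition, so their sum is orthogonal. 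Finally, $(c)\Leftrightarrow(d)$ is a direct translation via Corollary \ref{c purely Jordan genen der}, which says every purely Jordan generalized derivation has the form $G=G_\phi$ for a unique $\phi\in\mathcal{J}^*$.

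The step requiring the most care is the uniqueness claim in $(c)$, and I expect it to be the main obstacle. The existence of a decomposition is routine, but I must verify that the purely Jordan generalized derivation part and the Lie Jordan derivation part are \emph{uniquely} determined by $V$. The key point is that a purely Jordan generalized derivation $G$ gives a \emph{symmetric} form $a,b\mapsto G(a)(b)$, while a Lie Jordan derivation $D$ gives an \emph{anti-symmetric} form; hence if $G(a)(b)+D(a)(b)=G'(a)(b)+D'(a)(b)$ for all $a,b$, symmetrizing and anti-symmetrizing separates the two equations, forcing $G=G'$ and $D=D'$. The uniqueness in $(d)$ then follows from the uniqueness of $\phi$ in Corollary \ref{c purely Jordan genen der}. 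I would present the symmetric/anti-symmetric splitting once at the outset and invoke it throughout, so the whole corollary reduces to bookkeeping over the already-proved correspondences.
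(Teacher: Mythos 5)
Your proposal is correct and follows essentially the same route as the paper: split $V$ uniquely into its symmetric part $V_s$ and anti-symmetric part $V_{as}$, observe that orthogonality (on $\mathcal{J}_{sa}$ or on all of $\mathcal{J}$) passes to both parts, and then invoke Theorems \ref{t one-to-one correspondence between generalized purely Jordan derivations and symmetric orthognal forms} and \ref{t one-to-one correspondence between generalized purely Jordan derivations and anti-symmetric orthognal forms} together with Corollary \ref{c purely Jordan genen der} for statement $(d)$. Your explicit uniqueness argument via symmetrizing and anti-symmetrizing is a slightly more detailed rendering of what the paper leaves implicit in the bijectivity of those correspondences, but it is the same idea.
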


\begin{proof} $(a)\Rightarrow (b)$ is clear. To see $(b)\Rightarrow (c)$ and $(b)\Rightarrow (d)$, we recall that every form $V : \mathcal{J}\times \mathcal{J} \to \mathbb{C}$ writes uniquely in the form $V = V_{s} + V_{as},$ where $V_{s}, V_{as}:  \mathcal{J} \to \mathcal{J}^*$ are a symmetric and an anti-symmetric form on $\mathcal{J}$, respectively. Furthermore, since $V_{s} (a,b) =\frac12 (V(a,b) + V(b,a))$ and $V_{as} (a,b) =\frac12 (V(a,b) - V(b,a))$ ($a,b\in \mathcal{J}$), we deduce that $V$ is orthogonal (on $\mathcal{J}_{sa}$) if and only if both $V_s$ and $V_{as}$ are orthogonal (on $\mathcal{J}_{sa}$).  Therefore, the desired implications follow from Theorems \ref{t one-to-one correspondence between generalized purely Jordan derivations and symmetric orthognal forms} and \ref{t one-to-one correspondence between generalized purely Jordan derivations and anti-symmetric orthognal forms}. The same theorems also prove $(c)\Rightarrow (a)$ and $(d)\Rightarrow (a)$.
\end{proof}

We shall finish this note with an observation which helps us to understand the limitations of Goldstein theorem in the Jordan setting.

\begin{remark}\label{r there is no Jordan Goldstein theorem}{\rm
Let $A$ be a C$^*$-algebra, since the anti-symmetric orthogonal forms on $A$ and the Lie Jordan derivations from $A$ into $A^*$ are mutually determined, we can deduce, via Goldstein's theorem (cf. Theorem \ref{thm Goldstein}), that every Lie Jordan derivation $D: A \to A^*$ is an inner derivation, i.e., a derivation given by a functional $\psi\in A^*$, that is, $D (a) = \hbox{adj}_{\psi} (a) = \psi a - a \psi$ ($a\in A$). We shall see that a finite number of functionals in the dual of a JB$^*$-algebra $\mathcal{J}$ and a finite collection of elements in $\mathcal{J},$ i.e., the inner Jordan derivations are not enough to determine the Lie Jordan derivations from $\mathcal{J}$ into $\mathcal{J}^*$ nor the anti-symmetric orthogonal forms on $\mathcal{J}$. Indeed, as we have commented before, there exist examples of JB$^*$-algebras which are not Jordan weakly amenable, that is the case of $L(H)$ and $K(H)$ when $H$ is an infinite dimensional complex Hilbert space (cf. \cite[Lemmas 4.1 and 4.3]{HoPerRusQJM}). Actually, let $B= K(H)$ denote the ideal of all compact operators on $H$, and let $\psi$ be an element in $B^*$ whose trace is not zero. The proof of \cite[Lemmas 4.1]{HoPerRusQJM} shows that the derivation $D=\hbox{adj}_\psi : B \to B^*$, $a\mapsto \psi a - a \psi$ is not inner in the Jordan sense. Therefore the anti-symmetric form $V(a,b) = D(a) (b) = (\psi a - a \psi) (b) = \psi [a,b]$ cannot be represented in the form given in \eqref{eq inner anti-symmetric form}. A similar example holds for $B=B(H)$ (cf. \cite[Lemma 4.3]{HoPerRusQJM}).
}\end{remark}

\begin{remark}\label{remark approximation}{\rm We have already shown the existence of JBW$^*$-algebras which are not Jordan weakly amenable (cf. \cite[Lemmas 4.1 and 4.3]{HoPerRusQJM}). Thus, the problem of determining whether in a JB$^*$-algebra $\mathcal{J}$, the inner Jordan derivations on $\mathcal{J}$ are norm-dense in the set of all Jordan derivations on $\mathcal{J}$, takes on a new importance. When the problem has an affirmative answer for a JB$^*$-algebra $\mathcal{J}$, Theorem \ref{t one-to-one correspondence between generalized purely Jordan derivations and anti-symmetric orthognal forms} allows us to approximate anti-symmetric orthogonal forms on $\mathcal{J}$ by a finite collection of functionals in $\mathcal{J}^*$ and a finite number of elements in $\mathcal{J}$. Related to this problem, we remark that Pluta and Russo recently proved that the set of inner triple derivations from a von Neumann algebra $M$ into its predual is norm dense in the real vector space of all triple derivations, then $M$ must be finite, and the reciprocal statement holds if $M$ acts on a separable Hilbert space, or is a factor \cite[Theorem 1]{PluRu}. It would be interesting to explore the connections between normal orthogonal forms and normal Jordan weak amenability or norm approximation by normal inner derivations.}
\end{remark}

\medskip\medskip

\end{document}